\documentclass[10pt,oneside,reqno]{amsart}

  \usepackage[utf8]{inputenc}
  \usepackage[T1]{fontenc}
  \usepackage[russian,ngerman,english]{babel}
  \usepackage{lmodern}
  \usepackage{xcolor}

  \usepackage{hyperref}
  \usepackage{amsmath}
  \usepackage{amsthm}
  \usepackage{amsfonts}
  \usepackage{amssymb}
  \usepackage{amscd}
  \usepackage{amsbsy}
\usepackage{comment}

  \usepackage{enumerate}

  \usepackage[sort, numbers]{natbib}
  \usepackage{bibentry}

  \newtheorem{theorem}{Theorem}[section]

  \newtheorem{lemma}[theorem]{Lemma}

  \theoremstyle{definition}

  \newtheorem*{remark}{Remark}

  \newtheorem{conjecture}[theorem]{Conjecture}
  \newtheorem{example}[theorem]{Example}

  \numberwithin{equation}{section}

  \newcommand{\Z}{{\mathbb Z}}

  \newcommand{\R}{{\mathbb R}}
  \renewcommand{\C}{{\mathbb C}}

  \newcommand{\SL}{\mathrm{SL}}

  \newcommand{\bbZ}{{\mathbb Z}}
  
  \newcommand{\bbR}{{\mathbb R}}
  \newcommand{\bbC}{{\mathbb C}}

  \newcommand{\cK}{{\mathcal{K}}}

\newcommand{\M}{\mathring M^\eta}

\newcommand{\K}{\mathring{\mathcal K}^\eta}

\renewcommand{\Re}{\operatorname{Re}}
\renewcommand{\Im}{\operatorname{Im}}

\newcommand{\supp}{\operatorname{supp}}

\newcommand{\tr}{\operatorname{tr}}
\newcommand{\vl}{\overleftarrow}
\newcommand{\ol}{\overline}

\newcommand{\loc}{\textrm{loc}}
  
  \allowdisplaybreaks

\author[B.\ Eichinger]{Benjamin Eichinger}
\address{B.\ Eichinger: School of Mathematical Sciences, Lancaster University, Lancaster LA1 4YF, UK and Institute of Analysis and Scientific Computing, TU Wien, 1040~Wien, Austria}

\email{\href{mailto:b.eichinger@lancaster.ac.uk}{b.eichinger@lancaster.ac.uk}}

\author[M.\ Luki\'c]{Milivoje Luki\'c}

\address{M.\ Luki\'c: Department of Mathematics, Emory University, Atlanta, GA~30322, USA}
\email{\href{mailto:milivoje.lukic@emory.edu}{milivoje.lukic@emory.edu}}
\thanks{M. L.\ was supported in part by NSF grants DMS--2154563/2626207 and DMS--2453758/2626193.}

\author[G.\ Young]{Giorgio Young}
\address{G.\ Young: Department of Mathematics, University of Wisconsin-Madison, 480 Lincoln Dr., Madison, WI 53706, USA}
\email{\href{mailto:gfyoung@wisc.edu}{gfyoung@wisc.edu}}
\thanks{G. Y.\ acknowledges the support of the National Science Foundation through grant DMS--2303363.}

\title{Clock spacing for two-sided Jacobi matrices}

\begin{document}

\begin{abstract}
We study local eigenvalue spacing for finite truncations of a two-sided Jacobi matrix with two movable endpoints. In particular, we show that a suitable analog of clock spacing follows from a pointwise reflectionlessness condition. We obtain this as a consequence of a new scaling limit for Christoffel--Darboux kernels with a movable starting point. Without reflectionlessness, we obtain a new class of limit kernels, which combine distinct contributions from $\pm\infty$. We also show that clock spacing in the two-sided setting is a fragile phenomenon, which can be destroyed by the change of a single Jacobi coefficient; in particular, it is not merely a consequence of absolutely continuous spectrum.
\end{abstract}

	\maketitle

\section{Introduction}

The goal of this paper is to study local eigenvalue spacing for finite Jacobi matrices
\[
J_{[k,n]} = \begin{pmatrix}
b_{k+1} & a_{k+1} & & \\
a_{k+1} & \ddots & \ddots & \\
& \ddots & \ddots & a_{n-1} \\
& & a_{n-1} & b_n
\end{pmatrix}
\]
arising as finite truncations of an infinite Jacobi matrix $J$, given formally by
\[
(Ju)_n = a_{n-1} u_{n-1}+ b_n u_n + a_n u_{n+1}, \qquad u \in \ell^2(\bbZ).
\]
If one endpoint is kept fixed, say $k =0$, the asymptotic local eigenvalue spacing as $n\to \infty$ is a well-studied problem, formulated as the study of zeros of orthogonal polynomials, see for instance \cite{Szego39,ErdosTuran40,AvilaLastSimon,EichLukSimanek,EichLukWor,KuiljaarsVanlessenIMRN02,LubinskyAnnals,LevinLubinsky08,LastSimon08,BSing} and the survey article \cite{LubSigma16}.
We will study the case when $k, n$ are both movable endpoints, which was not previously studied in the literature.

We assume throughout that $a_n > 0$, $b_n \in \bbR$, and that $J$ is limit-point at both $\pm \infty$ (equivalently, that $J$ is a self-adjoint operator with a core given by the set of compactly supported sequences in $\ell^2(\bbZ)$). We will denote by $J_+$, $J_-$ the half-line Jacobi matrices obtained by restricting $J$ to $\ell^2(\{n\in\bbZ \mid n\ge 1\})$ and $\ell^2(\{n\in\bbZ \mid n \le 0\})$, and denote by $m_\pm$ the Weyl $m$-functions associated with $J_\pm$.

To study the local distribution of eigenvalues of the matrix $J_{[k,n]}$ around a point $\xi \in \bbR$, we denote these eigenvalues by $\xi_j^{[k,n]}$, so that
\[
\dots < \xi_{-1}^{[k,n]} < \xi_0^{[k,n]} < \xi \le \xi_{1}^{[k,n]} < \xi_2^{[k,n]} < \dots
\]
We will work under the assumption that 
\begin{equation}\label{mplusnormallimit}
\lim_{y\downarrow 0} m_+(\xi + i y) = \eta \in \bbC_+,
\end{equation}
i.e., the normal limit exists and is equal to some $\eta \in \bbC_+:= \{z\in\C:\ \Im z>0\}$; the set of such $\xi$ is an essential support for the a.c.\ part of $J_+$. We consider a formal eigensolution $\psi$ at $\xi \in \bbR$, i.e., a nontrivial solution of the Jacobi recursion
\begin{equation}\label{eqnJacobiRecursion}
a_{n-1} \psi_{n-1} + b_n \psi_n + a_n \psi_{n+1} = \xi \psi_n,
\end{equation}
which satisfies
\begin{equation}\label{eqnFloquetSolution}
- \frac{ \psi_1 }{ a_0 \psi_0} = \eta.
\end{equation}
This determines $\psi$ up to normalization, and we will write all formulas in a form independent of normalization. Eigensolutions obeying \eqref{eqnFloquetSolution} are well-known in spectral theory; in the scattering theory of decaying perturbations of the free Jacobi matrix, they can be recognized as multiples of Jost solutions, and in ergodic settings, they are generalizations of Floquet solutions \cite{DeiftSimon83}. 
A standard observation is that the solution $\psi$ can be interpreted as the limit of Weyl solutions at energy $\xi + iy$ as $y\downarrow 0$.

Since $\psi, \ol\psi$ are eigensolutions at $\xi$, their Wronskian 
\[
W(\ol \psi,\psi) = W_n(\ol\psi, \psi) = a_n ( \ol\psi_{n+1} \psi_n - \ol \psi_n \psi_{n+1})
\]
is independent of $n$, and a direct calculation gives $W(\ol \psi,\psi) / i = 2\lvert \psi_0 \rvert^2 \Im \eta > 0$.
We will use the scaling sequence
\begin{equation}\label{eqnScalingHalfline}
c_{[k,n]} = \frac{i}{\pi W(\ol \psi, \psi)} \sum_{j=k+1}^n \lvert \psi_j \rvert^2, \qquad 0 \le k \le n,
\end{equation}
and set $c_{[n,k]} = - c_{[k,n]}$. This will have the role of the local density of states at $\xi$ for the block $J_{[k,n]}$. We take a brief detour to relate it to macroscopic notions of density of states. 

\begin{remark}

In prominent cases of interest, orthogonal polynomials $p_n$ satisfy a limit of the form
\begin{equation}\label{eqnDensityOfStates1}
\lim_{n\to\infty} \frac 1n \sum_{j=0}^{n-1} p_j(\xi)^2 = \frac{w_\infty(\xi)}{w(\xi)},
\end{equation}
where $w(\xi) = d\mu(\xi) / d\xi \in (0,\infty)$ is the derivative of the orthogonality measure, and $w_\infty(\xi) \in (0,\infty)$ is interpreted as the density of states. In particular:
\begin{enumerate}[(a)]
\item If the orthogonality measure $\mu$ is Stahl--Totik regular, \eqref{eqnDensityOfStates1} holds under local Lebesgue point and local Szeg\H o conditions at the point $\xi$, where $w_\infty$ denotes the a.c.\ part of the equilibrium measure of $\supp\mu$ \cite{MNT,Totik2000AssympChris}.
\item For an almost periodic Jacobi matrix, \eqref{eqnDensityOfStates1} holds on an essential support of the a.c.\ part of the spectral measure, where $w_\infty$ denotes the a.c.\ part of the density of states \cite{AvilaLastSimon}.
\end{enumerate}
The following lemma explains how such results can be restated for the scaling sequence $c_{[k,n]}$. Note also the additivity relation  $c_{[k,n]} = c_{[k,l]} + c_{[l,n]}$ for $k \le l \le n$.
\end{remark}

\begin{lemma}\label{lemmaChristoffel}
If $\xi\in\bbR$ is such that \eqref{mplusnormallimit}, \eqref{eqnDensityOfStates1} hold for $J_+$, then $c_{[0,n]} / n  \to w_\infty(\xi)$ as $n\to\infty$.
\end{lemma}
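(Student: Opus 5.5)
The plan is to write $\psi$ explicitly in terms of the orthonormal polynomials of $J_+$. We then split $|\psi_j|^2$ into the squares of its real and imaginary parts. The imaginary part is a multiple of the polynomial solution, so its average is controlled directly by \eqref{eqnDensityOfStates1}. The real part is then controlled by a Weyl-function argument.

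\textbf{Step 1: normalization and the imaginary part.} Normalize $\psi_0=1$, so $\psi_1=-a_0\eta$ by \eqref{eqnFloquetSolution}. Let $P,Q$ be the real solutions of \eqref{eqnJacobiRecursion} with $(P_0,P_1)=(0,1)$ and $(Q_0,Q_1)=(1,0)$. Then $P_j=p_{j-1}(\xi)$, where $p$ are the orthonormal polynomials of $\mu$, the spectral measure of $\delta_1$ for $J_+$, and
\[
\psi = Q - a_0\eta P,\qquad \Im\psi = -a_0 \Im\eta\, P,\qquad \Re\psi = Q - a_0\Re\eta\, P .
\]
The normal limit \eqref{mplusnormallimit} gives $w(\xi)=\Im\eta/\pi$. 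Hence \eqref{eqnDensityOfStates1} yields
\[
\frac1n\sum_{j=1}^n(\Im\psi_j)^2 = a_0^2(\Im\eta)^2\,\frac1n\sum_{j=0}^{n-1}p_j(\xi)^2 \;\longrightarrow\; \pi a_0^2\,\Im\eta\, w_\infty(\xi).
\]
Since $W(\ol\psi,\psi)/i=2\Im\eta$, the claim $c_{[0,n]}/n\to w_\infty(\xi)$ becomes $\frac1n\sum_{j=1}^n|\psi_j|^2\to 2\pi\Im\eta\,w_\infty(\xi)$. The powers of $a_0$ must be tracked consistently here; if the normalization $m_+=\langle\delta_1,(J_+-z)^{-1}\delta_1\rangle$ is used, the factor $a_0$ should be absorbed when matching $P$ with $p$. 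This step therefore reduces the lemma to the following asymptotic equipartition statement:
\[
\frac1n\sum_{j=1}^n(\Re\psi_j)^2 - \frac1n\sum_{j=1}^n(\Im\psi_j)^2 \to 0.
\]

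\textbf{Step 2 (main obstacle): equipartition of the real part.} The hypothesis \eqref{eqnDensityOfStates1} only concerns the single solution $P$. The only extra information is that $\eta$ is the boundary value of $m_+$, and this has to produce the comparison. I would use the Weyl solutions $\psi(z)$ at $z=\xi+iy$, normalized by $\psi_0(z)=1$ and $\psi_1(z)=-a_0m_+(z)$. They are in $\ell^2$ and satisfy the exact identity
\[
y\sum_{j\ge1}|\psi_j(z)|^2 = a_0^2\,\Im m_+(z),
\]
obtained by summing the Wronskian identity. I would also use the variation-of-parameters formula
\[
\psi_j(z)-\psi_j = \bigl(\text{boundary term } O(|m_+(z)-\eta|)\bigr) + iy\sum_{l=1}^{j} K_{j,l}\,\psi_l(z),
\]
where $K$ is built from $P$ and $Q$.

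Choose the scale $y=\varepsilon/n$. By \eqref{mplusnormallimit} the boundary term tends to $0$. A Gronwall-type bound, using only $\frac1n\sum_{j\le n}(P_j^2+Q_j^2)=O(1)$ (via Jitomirskaya--Last type comparisons), shows that the kernel term is $O(\varepsilon)$ in averaged $\ell^2$ on $[1,n]$. Hence $\frac1n\sum_{j\le n}|\psi_j(z)-\psi_j|^2=O(\varepsilon)$.

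\textbf{Step 3: the decay identity and the limit.} Dividing the decay identity by $n$ gives
\[
\frac{\varepsilon}{n}\sum_{j\ge1}|\psi_j(z)|^2\to a_0^2\Im\eta .
\]
Next I would compute the averaged growth of $\sum_{j\le Ln}|\psi_j(\xi+i\varepsilon/n)|^2$ in $L$. Combining this with Step 2 and letting $\varepsilon\to0$ identifies $\frac1n\sum_{j\le n}|\psi_j|^2$ with twice the imaginary-part average from Step 1. This is the point where reflectionlessness-type information, namely that $\psi(z)$ is exactly the $\ell^2$ solution, forces the $\Re$ and $\Im$ parts to carry equal mass.

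I expect Step 2 to be hard, in particular obtaining uniform control of the cross term $\sum_j\Re\psi_j\,\Im\psi_j$. If the Gronwall approach is too lossy, I would fall back on Christoffel--Darboux kernel asymptotics on the scale $1/n$, which follow from \eqref{eqnDensityOfStates1} together with the Lebesgue-point behaviour encoded in \eqref{mplusnormallimit}.
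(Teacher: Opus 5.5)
Your Step~1 reduction is correct, and the equipartition you isolate is true. One bookkeeping point: with $\psi_0=1$ one has $W(\ol\psi,\psi)/i=2a_0^2\Im\eta$. So the target is $\frac1n\sum_{j\le n}|\psi_j|^2\to 2\pi a_0^2\Im\eta\,w_\infty(\xi)$, which is exactly twice your imaginary-part limit.

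\textbf{The gap.} Steps~2--3 do not prove the equipartition, and the gap is essential.
\begin{enumerate}
\item Step~2 controls the Weyl solution $u(\xi+i\varepsilon/n)$ only on $[1,Ln]$ with relative error $O(L\varepsilon)$. There $u$ carries mass $O(Ln)$.
\item The decay identity concerns the total mass $\approx a_0^2\Im\eta\, n/\varepsilon$. The region where Step~2 applies carries only a fraction $O(L\varepsilon)$ of it. Almost all of the mass sits at $j\gtrsim n/\varepsilon$, where $u(z)$ is no longer close to $\psi$.
\item How that mass is distributed depends on how $P$ and $Q$ mix on all scales $Ln$, $L\lesssim 1/\varepsilon$. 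That is, it depends on the Ces\`aro averages of $\binom{P_j}{Q_j}\binom{P_j}{Q_j}^{T}$, which is precisely the unknown. So ``computing the averaged growth in $L$'' presupposes the conclusion.
\item The decay identity holds for every Jacobi matrix and only re-expresses $\Im m_+(z)$, whose convergence you already assume. As a single scalar identity, it cannot force how $|\psi_j|^2$ splits between real and imaginary parts.
\item No reflectionlessness is involved; the statement is purely one-sided.
\item Your CD-kernel fallback does not supply the missing ingredient either. Scalar kernel asymptotics concern $p_j$ alone and say nothing directly about $q_j$.
\end{enumerate}

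\textbf{What is actually needed.} The missing ingredient is non-perturbative. The normal limit gives, by Lindel\"of, $m_+(\xi+z\delta)\to\eta$ locally uniformly in $z\in\bbC_+$ as $\delta\downarrow0$. This must be converted into convergence of the rescaled Hamiltonians. That conversion uses compactness of the rescaled transfer matrices, continuity of the Weyl map for canonical systems, and de~Branges uniqueness: the only trace-normed Hamiltonian with constant Weyl function $\eta$ is $\mathring H_\eta$.

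This is \cite[Theorem 1.5]{EichLukSimanek}, Theorem~\ref{thm:ELS1} here. Together with \eqref{eqnELS1}, it yields
\[
\frac1{\tau_n}\sum_{j<n}q_j(\xi)^2\to\frac{|\eta|^2}{1+|\eta|^2}, \qquad \frac1{\tau_n}\sum_{j<n}p_j(\xi)q_j(\xi)\to-\frac{\Re\eta}{1+|\eta|^2},
\]
which is exactly your equipartition. The paper simply invokes this result. Lemma~\ref{lemmaFloquetTauProduct} gives $c_{[0,n]}\sim\frac{\Im\eta}{\pi(1+|\eta|^2)}\tau_n$, and \eqref{eqnELS1} gives $\sum_{j<n}p_j(\xi)^2\sim\tau_n/(1+|\eta|^2)$. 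Hence $c_{[0,n]}\sim w(\xi)\sum_{j<n}p_j(\xi)^2$, and \eqref{eqnDensityOfStates1} finishes the proof.
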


We return to our main goal, which is to describe the asymptotic behavior of eigenvalues $\xi_j^{[k,n]}$ under local spectral assumptions. The behavior of the movable endpoints can be split into two cases. If both endpoints go in the same direction, say towards $+\infty$, then the eigenvalue scaling is governed by the behavior of $m_+$:

\begin{theorem}\label{thmOneSidedClock}
Assume that at the point $\xi \in \bbR$, the normal limit
\[
\lim_{y\downarrow 0} m_+(\xi + iy)
\]
exists and is in $\bbC_+$. Then for any sequences $k_l < n_l$ with $\inf k_l > -\infty$, $n_l \to \infty$ as $l\to\infty$, and
\begin{equation}\label{eqnLimsupCondition}
\limsup_{l\to \infty} \frac{c_{[0,k_l]}}{c_{[0,n_l]}}< 1,
\end{equation}
 we have
\begin{align}\label{eq:Halfline2Clock}
\lim_{l \to \infty}  c_{[k_l,n_l]} ( \xi_{j+1}^{[k_l,n_l]} - \xi_{j}^{[k_l,n_l]} ) = 1, \qquad \forall j\in \bbZ.
\end{align}
\end{theorem}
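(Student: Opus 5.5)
Two things are proved in order. First, a scaling limit for the Christoffel--Darboux kernel of $J_{[k,n]}$ with a movable starting point $k$. Second, the eigenvalue spacing, deduced from that limit by the standard de Branges / Lubinsky-type argument.

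\textbf{Setup.} For fixed $k$, let $u^{(k)}(z)$ be the eigensolution at energy $z$ with $u^{(k)}_k(z)=0$ and $u^{(k)}_{k+1}(z)=1$. Then
\[
\det(z-J_{[k,n]}) = \Bigl(\prod_{j=k+1}^{n-1} a_j\Bigr)\, u^{(k)}_{n+1}(z),
\]
so the eigenvalues of $J_{[k,n]}$ are the zeros of $u^{(k)}_{n+1}$. The relevant kernel is
\[
K_{[k,n]}(z,w)=\sum_{j=k+1}^n u^{(k)}_j(z)\,\ol{u^{(k)}_j(w)},
\]
which is a reproducing kernel of a de Branges space; by Christoffel--Darboux it is expressed through a Wronskian of $u^{(k)}$ at the endpoint $n$.

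\textbf{Step 1: kernel limit.} The claim to prove is
\[
\frac{K_{[k_l,n_l]}\bigl(\xi+\tfrac{z}{c_{[k_l,n_l]}},\,\xi+\tfrac{\bar w}{c_{[k_l,n_l]}}\bigr)}{K_{[k_l,n_l]}(\xi,\xi)}\ \longrightarrow\ \frac{\sin\pi(z-w)}{\pi(z-w)}
\]
uniformly on compacts. The route is as follows.

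\begin{itemize}
\item Write $u^{(k)}$ in the basis $\psi,\ol\psi$ at $\xi$:
\[
u^{(k)} = \frac{\ol\psi_k\,\psi-\psi_k\,\ol\psi}{W(\ol\psi,\psi)}\cdot(\text{normalization}),
\]
so that $u^{(k)}_j(\xi)$ is proportional to $\Im(\ol\psi_k\psi_j)$.
\item Perturb to $z=\xi+\tau/c$ by variation of parameters (transfer matrices). Because $m_+(\xi+iy)\to\eta$, the $\psi$-component behaves like a Weyl solution. The phase acquired by $\psi_j$ between $k$ and $n$ is then $\exp\bigl(i\pi\tau\, c_{[k,j]}/c_{[k,n]}\bigr)$ to leading order.
\item The error terms are controlled by the known fixed-endpoint results (the $k=0$ case, Lubinsky/Avila--Last--Simon style arguments via the normal limit of $m_+$), combined with the additivity $c_{[k,n]}=c_{[k,0]}+c_{[0,n]}$.
\end{itemize}

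Condition \eqref{eqnLimsupCondition} guarantees $c_{[k_l,n_l]}\asymp c_{[0,n_l]}\to\infty$. Hence the mass of $|\psi_j|^2$ on $[k_l,n_l]$ is a nonvanishing fraction of the total, and the cross terms $\sum_j\psi_j^2$ are negligible relative to $\sum_j|\psi_j|^2$. The cross terms are handled by an oscillation argument: $\psi^2$ averages out because $\eta\notin\bbR$, which is essentially the statement that $W(\ol\psi,\psi)\neq0$ forces the dominant contribution to be $|\psi|^2$. The sum then becomes a Riemann sum for $\int_0^1 e^{i\pi(z-\bar w)t}\,dt$ weighted symmetrically, giving the sine kernel.

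\textbf{Step 2: zeros.} Pass from the kernel limit to zero spacing. The rescaled polynomials
\[
u^{(k_l)}_{n_l+1}\bigl(\xi+z/c_{[k_l,n_l]}\bigr)
\]
(suitably normalized) converge to a function of the form $\sin(\pi(z-\alpha))$ along subsequences. Eigenvalues are real and simple, so Hurwitz's theorem gives that the rescaled zeros converge to $\alpha+\bbZ$, which yields \eqref{eq:Halfline2Clock}. Since the limit $1$ does not depend on $\alpha$, the subsequence issue disappears. Alternatively, one can use the standard lemma that a sine-kernel limit of a de Branges kernel implies clock spacing of the zeros of $E$ (Lubinsky; Levin--Lubinsky).

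\textbf{Main obstacle.} The main difficulty is the uniform control, in Step 1, of the perturbation from $\xi$ to $\xi+\tau/c$ across the whole window $[k_l,n_l]$. Since $k_l$ may grow, we cannot reduce to a fixed initial condition. My first attempt is to express everything through the fixed-endpoint solutions from $0$, using $u^{(k)}=\alpha_k\,u^{(0)}+\beta_k\,v^{(0)}$. I would then invoke the known locally uniform convergence of the $k=0$ kernels and the mixed kernels from the half-line theory, and subtract the $[0,k_l]$ contribution. Condition \eqref{eqnLimsupCondition} is exactly what keeps this subtraction nondegenerate.
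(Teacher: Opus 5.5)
Your overall architecture is right, and your closing paragraph is essentially the paper's proof in outline. With $v_k=T_{[0,k]}(\xi)^{-1}\binom{1}{0}$ one has $K_{[k,n]}(z,w)=v_k^*\frac{M_{[k,n]}(w)^*jM_{[k,n]}(z)-j}{\ol w-z}v_k$ and $M_{[k,n]}=M_{[0,n]}M_{[0,k]}^{-1}$, and \eqref{eqnLimsupCondition} keeps $\sigma=\lim_l\tau_{k_l}/\tau_{n_l}$ below $1$.

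But Step~1, as you actually argue it, has a genuine gap. The cancellation $\sum_j\psi_j^2=o\bigl(\sum_j\lvert\psi_j\rvert^2\bigr)$ does not follow from $\eta\notin\bbR$ or from $W(\ol\psi,\psi)\neq0$. For the free Jacobi matrix at $\xi=2\cos\theta$, the solution $\psi_j=\cos(j\theta)+2i\sin(j\theta)$ has $W(\ol\psi,\psi)\neq0$, yet $\sum_{j\le n}\psi_j^2\sim-\frac32 n$ while $\sum_{j\le n}\lvert\psi_j\rvert^2\sim\frac52 n$. The cancellation holds only for the particular solution selected by $\eta=\lim_{y\downarrow0}m_+(\xi+iy)$, and it is a genuine consequence of the normal-limit hypothesis.

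The same is true of your phase claim. It amounts to $M_{[0,j]}(\xi+z/\tau_n)\approx\mathring M^{\eta}_{\tau_j/\tau_n}(z)$ uniformly in $j\le n$, together with $\mathring M^{\eta}_t(z)\binom{\eta}{1}=e^{izth_\eta}\binom{\eta}{1}$ and $h_\eta\tau_n\sim\pi c_{[0,n]}$ (Lemma~\ref{lemmaFloquetTauProduct}). Fixed-endpoint universality for $p_n$ in the style of Lubinsky or Avila--Last--Simon cannot supply this. It only sees the initial direction $\binom{1}{0}$, whereas $v_k/\lVert v_k\rVert$ may wander over the whole circle, and it is not available under \eqref{mplusnormallimit} alone.

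The missing ingredient is the transfer-matrix scaling limit of Theorem~\ref{thm:ELS1}, valid for all initial conditions. From it the cancellation is immediate: for $\psi_1=\eta$, $-a_0\psi_0=1$ one has $\sum_{j=1}^n\psi_j^2=\binom{\eta}{1}^T\cK_{[0,n]}(\xi,\xi)\binom{\eta}{1}$, $\cK_{[0,n]}(\xi,\xi)/\tau_n\to\mathring H_\eta$, and $\binom{\eta}{1}^T\mathring H_\eta\binom{\eta}{1}=0$.

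Granting Theorem~\ref{thm:ELS1}, your Riemann-sum route can be completed. The oscillating cross term $\sum_j\psi_j^2e^{i\pi(z+\ol w)t_j}$ is removed by Abel summation using $\sup_{m\le n}\lvert\sum_{j\le m}\psi_j^2\rvert=o(\tau_n)$. The paper, however, needs no sums at all:
\begin{enumerate}[(a)]
\item With $\tau_l=\tau_{n_l}-\tau_{k_l}$, the semigroup property gives $M_{[k_l,n_l]}(\xi+z/\tau_l)\to\mathring M^{\eta}_{1/(1-\sigma)}(z)\mathring M^{\eta}_{\sigma/(1-\sigma)}(z)^{-1}=\mathring M^{\eta}_1(z)$.
\item For every real unit vector $\hat v$ one has $\hat v^*\mathring{\mathcal K}^{\eta}_1(z,w)\hat v=\hat v^*\mathring H_\eta\hat v\,\frac{\sin(h_\eta(\ol w-z))}{h_\eta(\ol w-z)}$, because $\hat v^*j\hat v=0$.
\item Compactness in $(\sigma,\hat v)$ then removes the subsequences.
\end{enumerate}

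For Step~2 the paper uses the Freud--Levin theorem, i.e.\ your second option. Your Hurwitz option also works, but the convergence of the rescaled $u^{(k_l)}_{n_l+1}$ has to be read off from the matrix limit, not from the scalar kernel limit of Step~1. Along subsequences the normalized functions converge to $\hat r^T\mathring M^{\eta}_1(\pi\zeta/h_\eta)\hat v=A\cos(\pi\zeta)+B\sin(\pi\zeta)$, where $\hat r,\hat v$ are limits of the normalized vectors $T_{[0,n_l]}(\xi)^T\binom{1}{0}$ and $v_{k_l}$. Here $(A,B)\neq(0,0)$ because $(j\mathring H_\eta/h_\eta)^2=-I$, so $\hat v$ and $j\mathring H_\eta\hat v$ span $\bbR^2$.
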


The assumption \eqref{eqnLimsupCondition} ensures that the block $[k_l,n_l]$ does not go to infinity too fast relative to its size. We regard it as an interesting open problem whether this assumption can be replaced by the weaker assumption $c_{[k_l,n_l]} \to \infty$ as $l \to\infty$ without any additional assumptions.

If the two movable endpoints move in opposite directions, the result needs a pointwise version of the reflectionless condition at $\xi$:

\begin{theorem}\label{thmTwoSidedClock1}
Assume that at the point $\xi \in \bbR$,
\begin{equation}\label{eqnReflectionlessJacobi}
 \lim_{y\downarrow 0} m_+(\xi + iy)  = \frac 1{a_0^2 \overline{ \lim\limits_{y\downarrow 0} m_-(\xi + iy) }} \in \bbC_+.
\end{equation}
Then for any sequences $k_l < n_l$ with $\sup_l k_l < \infty$, $\inf_l  n_l > -\infty$, and $\lim_{l\to\infty} (n_l - k_l) = \infty$,
 we have
\[ 
\lim_{l \to \infty}  c_{[k_l,n_l]} ( \xi_{j+1}^{[k_l,n_l]} - \xi_{j}^{[k_l,n_l]} ) = 1, \qquad \forall j\in \bbZ.
\] 
\end{theorem}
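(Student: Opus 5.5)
The plan is to reduce eigenvalue spacing to a scaling limit of Christoffel--Darboux kernels of the finite block $J_{[k,n]}$, which, by the abstract, is the route the paper takes. Let $p(z)$ and $q(z)$ be the formal eigensolutions at energy $z$ normalized by the Dirichlet conditions at the left end ($u_k=0$) and at the right end ($u_{n+1}=0$). Then $\det(z-J_{[k,n]})$ is proportional to their Wronskian, so the eigenvalues of $J_{[k,n]}$ are exactly the zeros of one explicit entire function. Equivalently, they are the points where the normalized solution vector $(u_k,u_{k+1})$ propagated from $k$ satisfies the boundary condition at $n$.

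I will write the transfer matrices $T_{[k,n]}(z)$ at energy $z=\xi+w/c_{[k,n]}$ in the basis $\psi,\ol\psi$. The expected limit is
\[
T \approx \begin{pmatrix} e^{i\pi w\,\alpha} & 0\\ 0& e^{-i\pi w\,\alpha}\end{pmatrix},
\]
up to normalization, where the phase is carried by $\psi$. The justification is a variation-of-parameters (Duhamel) expansion. The first-order correction to the $\psi$-coefficient is $\frac{w}{c}\sum_j |\psi_j|^2/W(\ol\psi,\psi)$, which by \eqref{eqnScalingHalfline} equals $w\cdot(-i\pi)$ times the ratio of partial sums, and this gives the phase. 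The off-diagonal terms involve the oscillatory sums $\frac1c\sum_j\psi_j^2$. I need these to be $o(c_{[k,n]})$, and this should follow from the existence of the normal limit of $m_+$ (for sums running to $+\infty$) and of $m_-$ (for sums running to $-\infty$), in the style of the half-line result Theorem~\ref{thmOneSidedClock}. Concretely, I would obtain the asymptotics separately for the pieces $[k,0]$ and $[0,n]$: each piece is handled like a half-line block, using $m_-$ for the left piece and $m_+$ for the right piece. I would then multiply the two transfer matrices using the additivity $c_{[k,n]}=c_{[k,0]}+c_{[0,n]}$. The phases add, $\alpha_{[k,0]}+\alpha_{[0,n]}=1$.

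The key point is the role of \eqref{eqnReflectionlessJacobi}. On the left half-line the natural Floquet-type solution built from $m_-$ is $\tilde\psi$, with $\tilde\psi_0/\tilde\psi_1$ determined by $\lim m_-$. The reflectionless condition says exactly that $\tilde\psi$ is proportional to $\ol\psi$, up to conjugation. Hence the left piece is diagonal in the same basis $\{\psi,\ol\psi\}$, rather than in a rotated basis. Without this condition the two diagonalizing bases differ, and the product produces a mixing term; this is the source of the new limit kernels mentioned in the abstract.

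Once $T$ is diagonal with total phase $e^{\pm i\pi w}$, I impose the Dirichlet boundary conditions at both ends. The boundary vectors are fixed, real, and independent of $l$, so the eigenvalue condition becomes $\sin(\pi w+\theta_l)=0$ asymptotically, for some phase $\theta_l$. By Hurwitz's theorem, locally uniform convergence of these entire functions of $w$ gives zeros of the form $w_j=j-\theta_l/\pi+o(1)$. This yields $c_{[k_l,n_l]}(\xi_{j+1}-\xi_j)\to1$; to make the argument rigorous I would pass to subsequences along which $\theta_l$ converges. The labeling relative to $\xi$ is consistent because the zeros are simple and interlace.

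The main obstacle is uniformity when one of the two pieces is short or bounded, and in particular when $k_l$ or $n_l$ stays bounded. In that case the short piece has a non-negligible, non-diagonal transfer matrix at scale $1/c_{[k,n]}$. My first attempt would be to observe that such a piece then has $c\to$ bounded while $z\to\xi$, so its transfer matrix converges to the exact transfer matrix $T_\xi$ of the Jacobi recursion at energy $\xi$. This matrix maps $\psi$ to $\psi$ exactly, so it is diagonal in our basis with no phase and is absorbed into $\theta_l$. A subsequence argument covers the intermediate regimes. The remaining technical step is controlling the error sums $\sum\psi_j^2/\sum|\psi_j|^2\to0$ from the normal limits of $m_\pm$. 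For this I would try using the limit $\psi=\lim$ of Weyl solutions at $\xi+iy$ with $y\asymp 1/c$, together with a Combes--Thomas/Weyl disk estimate.
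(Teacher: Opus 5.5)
Your architecture is sound and closely parallels the paper's. Splitting at $0$, homogenizing each half-line piece separately, and using \eqref{eqnReflectionlessJacobi} to make the two diagonalizing bases coincide is exactly the mechanism of Lemmas~\ref{lem:scaling} and~\ref{lem:comp}. Indeed, the Floquet vector $(\eta,1)^T$ is an eigenvector of $j\mathring{H}_\eta$ with eigenvalue $ih_\eta$. So your diagonal limit in the basis $\{\psi,\ol\psi\}$ is $\mathring{M}^\eta_t$ in disguise, and your ``mixing term'' is the two-piece Hamiltonian in $\mathring{L}$.

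Your endgame is different and also valid. You read eigenvalues off as zeros of $u_{n+1}(z)=T_{[k,n]}(z)_{11}$ and apply Hurwitz, whereas the paper passes to the CD kernel and invokes Freud--Levin. This works because $\psi_j\neq 0$ for all $j$: otherwise $\psi$ would be a complex multiple of a real solution and $W(\ol\psi,\psi)=0$. Hence dividing by $2a_k\lvert\psi_k\psi_{n+1}\rvert/\lvert W(\ol\psi,\psi)\rvert$ makes the error relative, and the limit is a real sinusoid $\sin(\pi w+\theta)$. This also treats bounded $k_l$ or $n_l$ in the same stroke. For the Dyson product you need the off-diagonal partial sums to be small uniformly over initial segments, and single terms to be small. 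Both follow from the full half-line estimate by monotonicity and $\lvert\psi_j^2\rvert=\lvert\psi_j\rvert^2$.

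The genuine gap is the step you defer: that the normal limit $m_+(\xi+i0)=\eta\in\bbC_+$ forces $\sum_{j=1}^N\psi_j^2=o\bigl(\sum_{j=1}^N\lvert\psi_j\rvert^2\bigr)$, and the analogue on the left from $m_-$. This is not a technicality but the whole analytic content of the theorem, and the tools you name do not give it:
\begin{itemize}
\item Combes--Thomas bounds give decay of the resolvent at rate $\sim y$ per site and say nothing about cancellation in $\sum\psi_j^2$.
\item Comparing the Weyl solution at $\xi+iy$, $y\asymp 1/c$, with $\psi$ by variation of parameters is circular, since the error terms are exactly the oscillatory sums you want to bound.
\item Jitomirskaya--Last/Weyl disk estimates give only comparability up to constants, never $o(1)$.
\end{itemize}
Cauchy estimates in a Stolz angle do give $y\,m_+'(\xi+iy)\to0$, i.e.\ $\sum_jG_j^2/\sum_j\lvert G_j\rvert^2\to0$ for the Weyl solution $G$ at $\xi+iy$. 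But this Abel-type average cannot be converted to sharp cutoffs for $\psi$ without the same comparison.

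The paper does not prove this estimate itself; it imports it from \cite{EichLukSimanek}. Theorem~\ref{thm:ELS1} together with \cite[Theorem~1.5]{EichLukSimanek} give $\frac{1}{\tau_n}\sum_{j=0}^{n-1}\bigl(q_j(\xi)+\eta p_j(\xi)\bigr)^2\to\frac{\lvert\eta\rvert^2+\eta^2-2\eta\Re\eta}{1+\lvert\eta\rvert^2}=0$, which is exactly what you need.

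The underlying argument is soft rather than quantitative:
\begin{itemize}
\item $m_+(\xi+z/\tau_n)\to\eta$ locally uniformly on $\bbC_+$, since a normal limit of a Herglotz function is nontangential.
\item The rescaled trace-normed Hamiltonians have weak limit points.
\item The Weyl function depends continuously on the Hamiltonian and determines it.
\item The only Hamiltonian with Weyl function identically $\eta$ is the constant $\mathring{H}_\eta$.
\end{itemize}
Cite that result, or supply this argument, and your proof closes.
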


\begin{remark}
Although many of our assumptions are stated with respect to $0$ as a reference point, they are all independent of the origin. For instance, the condition \eqref{eqnReflectionlessJacobi} is equivalent for any $k\in \bbZ$ to
\begin{equation}\label{eqnReflectionlessJacobik}
 \lim_{y\downarrow 0} m_{+,k}(\xi + iy)  = \frac 1{a_k^2 \overline{ \lim\limits_{y\downarrow 0} m_{-,k}(\xi + iy) }} \in \bbC_+
\end{equation}
where $m_{\pm,k}$ correspond to half-line Jacobi matrix restrictions to $\{n\in\bbZ \mid n \ge k+1\}$ and $\{n\in\bbZ \mid n \le k\}$; this is a standard observation in the theory of reflectionless operators.
\end{remark}

We note that the conclusions of the previous two theorems imply the statement
\begin{equation}\label{eq:WeakClock}
\lim_{l \to \infty} \frac{ \xi_{j+1}^{[k_l,n_l]} - \xi_{j}^{[k_l,n_l]}}{ \xi_{1}^{[k_l,n_l]} - \xi_{0}^{[k_l,n_l]}} = 1, \qquad \forall j\in \bbZ
\end{equation}
which is independent of the scaling sequence. The asymptotically uniform spacing of eigenvalues described by \eqref{eq:Halfline2Clock} or \eqref{eq:WeakClock} is sometimes described as clock spacing, compare \cite{AvilaLastSimon}.

It may seem surprising that two-sided clock behavior is so fragile; the reflectionless condition \eqref{eqnReflectionlessJacobi} is destroyed by the change of a single Jacobi coefficient $a_0$, whereas absolutely continuous spectrum is preserved by trace class perturbations. 

To address this, we will show that clock behavior can indeed be destroyed by the change of a single off-diagonal Jacobi coefficient. We will illustrate this with a very explicit  example. The free Jacobi matrix ($a_n= 1$, $b_n = 0$ for all $n$) is reflectionless with spectrum $[-2,2]$; in fact, it satisfies the condition \eqref{eqnReflectionlessJacobi} pointwise at every $\xi \in (-2,2)$. In particular, it has two-sided clock behavior at $\xi = 0$, as described by Theorem~\ref{thmTwoSidedClock1}. We will show that this behavior is destroyed by changing the coefficient $a_0$.

\begin{example}\label{xmplAlmostFree}
Let $J$ be the full line Jacobi matrix such that $b_n = 0$ for all $n\in\bbZ$ and $a_n = 1$ for all $n\in\bbZ \setminus \{0\}$.
Labelled around the point $\xi=0$, the zeros $\xi_j^{[-n+1,n]}$ satisfy, for every $j\in\bbZ$,
\begin{align}
\lim_{\substack{n\to\infty \\ n-j\text{ odd}}} n \left(\xi_{j+1}^{[-n+1,n]} - \xi_j^{[-n+1,n]} \right) & =  4 \arctan a_0 \label{eqnAlmostFreeOdd} \\
\lim_{\substack{n\to\infty \\ n-j\text{ even}}} n \left(\xi_{j+1}^{[-n+1,n]} - \xi_j^{[-n+1,n]} \right) & = 2\pi - 4 \arctan a_0 \label{eqnAlmostFreeEven}
\end{align}
In particular, for $a_0 \neq 1$, there is no clock spacing at $\xi = 0$.
\end{example}

The main novelty in our proofs is the analysis of CD kernels associated to a movable left endpoint. Whereas prior work required analysis of orthogonal polynomials, our results require control over all formal eigensolutions; we obtain this by using scaling limits of half-line transfer matrices derived in \cite{EichLukSimanek}.

The main novelty in our results is the connection to the reflectionless property. If \eqref{eqnReflectionlessJacobi} holds on a set $A$ of positive Lebesgue measure, the Jacobi matrix is said to be reflectionless on $A$. By work of Kotani \cite{Kotani} and Remling \cite{Remling}, reflectionless property is closely related to a.c.\ spectrum; in particular, for an almost periodic Jacobi matrix, \eqref{eqnReflectionlessJacobi} holds a.e.\ on an essential support of a.c.\ spectrum. Thus, Theorem~\ref{thmTwoSidedClock1} generalizes Avila--Last--Simon \cite{AvilaLastSimon} in the sense that it proves more general clock behavior results for almost periodic Jacobi matrices. Part of the motivation for our work comes precisely from the ergodic setting; there, it is a standard fact that different ways of taking larger and larger truncations lead to the same density of states measure.

The reflectionless property has a central role in inverse spectral theory \cite{SodinYuditskii97}; it also has dynamical and scattering interpretations, compare \cite{BreuerRyckmanSimon,JaksicLandonPanati}. Our work offers a new interpretation.

\medskip

In Section~\ref{sectionFloquet}, we will further discuss the solution $\psi$ and prove Lemma~\ref{lemmaChristoffel}. In Section~\ref{sectionOneSided}, we will introduce kernels corresponding to truncations $J_{[k,n]}$, and prove a scaling limit of such kernels to a sine kernel under the assumptions of Theorem~\ref{thmOneSidedClock}. By a standard application of the Freud-Levin theorem, this will prove Theorem~\ref{thmOneSidedClock}. In Section~\ref{sectionTwoSided}, we will study the case of two endpoints going to opposite infinities. We will encounter a mixed limit kernel which depends on both behaviors at $\pm \infty$. We will see how the reflectionless assumption \eqref{eqnReflectionlessJacobi} leads again to a  sine kernel limit, which will prove Theorem~\ref{thmTwoSidedClock1}. In Section~\ref{sectionOneCoefficientChange}, we prove Example~\ref{xmplAlmostFree}. Finally, in Section~\ref{sectionSchrodinger} we prove Schr\"odinger operator analogs of Theorems~\ref{thmOneSidedClock}, \ref{thmTwoSidedClock1} in Theorems \ref{thmOneSidedClockschrod}, \ref{thmTwoSidedClockschrod}. 

\section{Eigensolution Asymptotics} \label{sectionFloquet}

Assume that \eqref{mplusnormallimit} holds. We consider a solution $\psi$ of \eqref{eqnJacobiRecursion} which satisfies \eqref{eqnFloquetSolution}. By viewing this as the limit of Weyl solutions at energy $\xi + iy$, we see that for any $k \in \bbZ$,
\[
- \frac{ \psi_{k+1}}{a_k \psi_k} = \lim_{y\downarrow 0} m_{+,k}(\xi + i y).
\]
We will write $f \sim g$ if $f/g \to 1$.

\begin{lemma}\label{lemmaFloquetTauProduct}
If \eqref{mplusnormallimit} holds, then
\begin{equation}\label{eqnLemma21}
c_{[0,n]}  \sim  \frac{ \Im \eta }{\pi(1+\lvert \eta\rvert^2)} \sum_{j=0}^{n-1} (p_j(\xi)^2 + q_j(\xi)^2 ), \quad n\to\infty.
\end{equation}
\end{lemma}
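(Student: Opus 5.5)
The plan is to reduce \eqref{eqnLemma21} to an asymptotic statement about the $2\times 2$ Gram matrix of the transfer matrices at $\xi$, and to obtain that statement from the half-line transfer-matrix scaling limits of \cite{EichLukSimanek}.

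\emph{Step 1: expand $\psi$ in the polynomial basis.} Write $p_j,q_j$ for the orthogonal polynomials of the first and second kind of $J_+$, indexed so that $p_j(\xi)$ and $q_j(\xi)$ live at the site $j+1$. Every formal eigensolution at $\xi$ on $\{n\ge 0\}$ is a combination of these. Condition \eqref{eqnFloquetSolution} says that $\psi$ is the $y\downarrow 0$ limit of the Weyl solution $q+m_+(\xi+iy)\,p$. Hence, after normalizing $\psi$ and fixing sign conventions,
\[
\psi_{j+1}=q_j(\xi)+\eta\,p_j(\xi).
\]
Equivalently, $\psi_{j+1}=\langle v_j,u\rangle$ with $v_j=(p_j(\xi),q_j(\xi))^T$ real and $u=(\eta,1)^T$ fixed, so that
\[
\sum_{j=0}^{n-1}|\psi_{j+1}|^2=u^*G_nu,\qquad G_n=\sum_{j=0}^{n-1}v_jv_j^T,\qquad \tr G_n=\sum_{j=0}^{n-1}\bigl(p_j(\xi)^2+q_j(\xi)^2\bigr).
\]
The Wronskian is computed at site $0$ as $W/i=2|\psi_0|^2\Im\eta$, and $|\psi_0|$ is an explicit constant in this normalization. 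So \eqref{eqnLemma21} is equivalent to
\[
u^*G_nu\sim \frac{2(\Im\eta)^2|\psi_0|^2}{1+|\eta|^2}\,\tr G_n ,
\]
with $|\psi_0|=|\psi_1|/(a_0|\eta|)$ inserted. The boundary terms $|\psi_0|^2$ versus $|\psi_n|^2$ are negligible once we know $\tr G_n\to\infty$. That divergence holds because $\Im\eta>0$ forces $\xi$ into the a.c.\ spectrum, where no solution is $\ell^2$.

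\emph{Step 2: the key input, Gram matrix asymptotics.} I aim to show $G_n/\tr G_n\to N_\eta$, where $N_\eta$ is the explicit trace-one positive matrix determined by $\eta$ (the limiting Weyl point). Concretely, $N_\eta$ is proportional to the matrix of the quadratic form $x\mapsto\Im\bigl(\ldots\bigr)$ whose kernel-free direction encodes $\eta$; it has determinant $(\Im\eta)^2/(1+|\eta|^2)^2$. Then $u^*N_\eta u$ is an explicit computation giving the constant in Step 1. To prove the limit, I will use the scaling limit of the transfer matrices $T_j(\xi+z/\tr G_n)$ from \cite{EichLukSimanek}, which applies exactly under \eqref{mplusnormallimit}.

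If I do the argument by hand instead, it goes as follows. Take $y_n=1/\tr G_n$ and the Weyl solution $f=q+m_+(\xi+iy_n)p$, which satisfies
\[
\Im m_+(\xi+iy_n)=y_n\sum_{j\ge0}|f_j(\xi+iy_n)|^2 .
\]
I then compare $T_j(\xi+iy_n)$ with $T_j(\xi)$ for $j<n$ by variation of parameters. The error is $O(y_n\tr G_n)$ in a normalized sense, so it is small only when $y_n$ is a small multiple of $1/\tr G_n$. The contribution of the tail $j\ge n$ is controlled by the Weyl disk at step $n$, whose radius shrinks and whose center tends to $\eta$. Letting the multiple tend to $0$ and using $m_+(\xi+iy)\to\eta$ identifies every limit point of $G_n/\tr G_n$ as $N_\eta$.

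\emph{Main obstacle.} The hard step is Step 2, specifically getting a \emph{uniform} statement along the full sequence $n$ rather than along a subsequence, and separating the contributions of $j<n$ and $j\ge n$. My first attempt is to quote the transfer-matrix scaling limit of \cite{EichLukSimanek} directly: at $z=0$ it states precisely that $G_n/\tr G_n$ converges to the rank-two matrix determined by $\eta$. Once that limit is available, Step 1 reduces the lemma to evaluating $u^*N_\eta u$, which is a routine algebraic check.
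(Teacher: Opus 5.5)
This is essentially the paper's argument: the same expansion $\psi_{j+1}=q_j(\xi)+\eta p_j(\xi)$ (normalized by $\psi_0=-1/a_0$, $\psi_1=\eta$) and the same input from \cite[Theorem 1.5]{EichLukSimanek}, followed by the same algebra $u^*\mathring H_\eta u=2(\Im\eta)^2/(1+|\eta|^2)$. You package that input as $G_n/\tr G_n\to\mathring H_\eta$; this is the kernel limit of Theorem~\ref{thm:ELS1} at $z=w=0$, since $G_n=\cK_{[0,n]}(\xi,\xi)$, whereas the paper quotes the three entrywise limits. So neither the variation-of-parameters sketch nor any boundary-term estimate is needed, because $c_{[0,n]}$ sums exactly $|\psi_1|^2,\dots,|\psi_n|^2$.

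One constant needs fixing. From \eqref{eqnFloquetSolution} one gets $W(\ol\psi,\psi)=2i\,a_0^2|\psi_0|^2\Im\eta$; the formula in the introduction omits the factor $a_0^2$. In your normalization this gives $W=2i\Im\eta$, so the correct target is $u^*G_nu\sim 2(\Im\eta)^2\tr G_n/(1+|\eta|^2)$. Your version carries an extra factor $|\psi_0|^2=1/a_0^2$, which would not match your final computation of $u^*\mathring H_\eta u$ unless $a_0=1$.
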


\begin{proof}
The statement is independent of normalization, so to verify it, we can renormalize the solution so that $\psi_{0} = -1/a_0$. In this case, $\psi_1 = \eta$ and $\psi_n = q_{n-1}(\xi) + \eta p_{n-1}(\xi)$, which implies
\[
\lvert \psi_j \rvert^2 = q_{j-1}(\xi)^2 + \lvert \eta \rvert^2 p_{j-1}(\xi)^2 + 2 \Re \eta p_{j-1}(\xi) q_{j-1}(\xi).
\]
Denote $\tau_n = \sum_{j=0}^{n-1} (p_j(\xi)^2 + q_j(\xi)^2 )$. By \cite[Theorem 1.5]{EichLukSimanek},
\begin{align}
\frac 1{\tau_n} \sum_{j=0}^{n-1} p_j(\xi)^2 & \to \frac{ 1 }{1+\lvert \eta\rvert^2} \label{eqnELS1} \\
\frac 1{\tau_n} \sum_{j=0}^{n-1} q_j(\xi)^2 & \to \frac{ \lvert \eta\rvert^2 }{1+\lvert \eta\rvert^2}\nonumber \\
\frac 1{\tau_n} \sum_{j=0}^{n-1} p_j(\xi) q_j(\xi) & \to -\frac{ \Re \eta }{1+\lvert \eta\rvert^2} \nonumber
\end{align}
so we obtain
\[
\sum_{j=1}^n \lvert \psi_j \rvert^2 \sim 2\tau_n \frac{ \lvert\eta\rvert^2 - \Re \eta \Re \eta }{ 1+\lvert \eta\rvert^2} = 2 \tau_n \frac{(\Im \eta)^2}{ 1+\lvert \eta\rvert^2}.
\]
Since our normalization yields $W(\ol \psi, \psi) = 2i\Im \eta$, this implies the claim.
\end{proof}

\begin{proof}[Proof of Lemma~\ref{lemmaChristoffel}]
This follows from the limits \eqref{eqnELS1}, \eqref{eqnLemma21} together with the standard identity $\Im\eta = \pi w(\xi)$ for boundary values of $m$-functions.
\end{proof}

For the two-sided setting, let us revise the general definition of the scale $c_{[k,n]}$. In general, we will assume 
existence of normal limits
\[
m_\pm(\xi + i 0) = \eta_\pm \in \bbC_+
\]
and consider eigensolutions $\psi^\pm$ defined by
\[
-\frac{\psi_1^+}{a_0 \psi_0^+} = \eta_+, \quad -\frac{\psi_0^-}{a_0 \psi_1^-} = \eta_-.
\]
In general, we consider $c_{[k,n]}$ to be defined by
\[
c_{[k,n]} = \frac{i}{\pi W(\ol \psi^-, \psi^-)} \sum_{j=\min\{k,0\}+1}^{\min\{n,0\}} \lvert \psi_j^- \rvert^2 + \frac{i}{\pi W(\ol \psi^+, \psi^+)} \sum_{j=\max\{k,0\}+1}^{\max\{n,0\}} \lvert \psi_j^+ \rvert^2.
\]
In the reflectionless case $\ol{\eta_-} = 1 / (a_0^2 \eta_+)$ implies that $\ol{\psi^-} = \psi^+$ so this is equivalent to the formula \eqref{eqnScalingHalfline}.

\section{One-sided scaling limits}\label{sectionOneSided}

Here we will encode the Jacobi recursion as a first-order matrix recursion represented by transfer matrices $T_{[k,n]}(z)$, so that for every solution $u$ of
\begin{equation}\label{eqnJacobiRecursion2}
a_{n-1} u_{n-1} + b_n u_n + a_n u_{n+1} = z u_n
\end{equation}
we have
\begin{equation}\label{eqnMatrixForward}
    T_{[k,n]}(z)\begin{pmatrix}
    u_{k+1}\\-a_{k}u_k
\end{pmatrix}
=\begin{pmatrix}
    u_{n+1}\\-a_{n}u_n
\end{pmatrix}.
\end{equation}
By including the negative signs in the vectors, it is ensured that these transfer matrices, when renormalized by taking $T_{[0,n]}(0)^{-1} T_{[0,n]}(z)$, will correspond to the canonical system which also has Weyl function equal to $m_+$ (see \cite{EichLukSimanek,EichLukWor}). In other words, with the notation
\[
\begin{pmatrix} A_{11} & A_{12} \\ A_{21} & A_{22} \end{pmatrix} * \tau = \frac{A_{11} \tau + A_{12}}{A_{21} \tau + A_{22}},
\]
for any $\tau \in \ol{\bbC_+}= \C_+\cup\R\cup\{\infty\}$, we have
\[
T_{[0,n]}(z)^{-1} * \tau \to m_+(z), \qquad n\to\infty.
\]
Note that
\[
T_{[k,n]}(z)=T_{[0,n]}(z)T_{[0,k]}(z)^{-1}.
\]
Let
\[
j=\begin{pmatrix}
    0&-1\\1&0
\end{pmatrix}.
\]

We denote
\begin{align}\label{eq:2}
    M_{[k,n]}(z)=T_{[0,n]}(\xi)^{-1}T_{[k,n]}(z)T_{[0,k]}(\xi).
\end{align}
Moreover, let us define 
\begin{align}\label{eq:3}
\cK_{[k,n]}(z,w)=\frac{T_{[k,n]}(w)^*jT_{[k,n]}(z)-j}{\overline w-z}.
\end{align}
Since $T_{[0,n]}(\xi)jT_{[0,n]}(\xi)^*=j$, we see that 
\begin{align}\label{eq:4}
\cK_{[k,n]}(z,w)=(T_{[0,k]}^{-1}(\xi))^*\frac{ M_{[k,n]}(w)^{*}jM_{[k,n]}(z)-j}{\overline w-z}T_{[0,k]}^{-1}(\xi).
\end{align}
Denote
\[
\tau_n=\tr \cK_{[0,n]}(\xi,\xi).
\]
If we denote by $p_j$, $q_j$ the orthogonal polynomials and second kind polynomials associated to $J_+$, then $\tau_n = \sum_{j=0}^{n-1} (p_j(\xi)^2 + q_j(\xi)^2)$. 
This is the natural scale for matrix scaling limits; by Lemma~\ref{lemmaFloquetTauProduct}, it is asymptotically proportional to $c_{[0,n]}$.

To study scaling limits of these kernels, we also need to introduce limiting objects. For $\eta\in\bbC_+$ we define 
\[
\mathring{H}_\eta=\frac{1}{1+|\eta|^2}\begin{pmatrix}
    1&-\Re\eta\\-\Re\eta&|\eta|^2
\end{pmatrix}
\]
and 
\[
\M_{t}(z)=e^{ztj\mathring{H}_\eta}.
\]
With
\[
h_\eta=\frac{\Im \eta}{1+|\eta|^2}=\sqrt{\det \mathring{H}_\eta}
\]
we have 
\begin{align*}
    j^{-1}\frac{\mathring{H}_\eta}{h_\eta}j=h_\eta \mathring{H}_\eta^{-1},
\end{align*}
which implies that 
\[
\M_{t}(z)=\cos(tzh_\eta)I+ \frac{\sin(tzh_\eta)}{h_\eta}j\mathring{H}_\eta.
\]
We also define the associated kernel
\[
\K_{t}(z,w)=\frac{\M_{t}(w)^*j\M_{t}(z)-j}{\ol w-z},
\]
which can be written as
\begin{align}\label{eq:KernelFormula}
    \K_{t}(z,w)=\frac{j(\cos(th_\eta(\overline{w}-z))-1)+\frac{\mathring{H}_\eta}{h_\eta}\sin(th_\eta(\overline{w}-z))}{\overline{w}-z}.
\end{align}

We will use the following theorem:

\begin{theorem}[\cite{EichLukSimanek}]\label{thm:ELS1}
 Fix $\xi \in \bbR$ such that \eqref{mplusnormallimit} holds.  Then, uniformly on compact subsets of $\bbC$,
\begin{align*}
    \lim_{n\to\infty} M_{[0,n]}\left(\xi+\frac{z}{\tau_n}\right)= \M_{1}(z).
\end{align*}
In particular, uniformly in compact subsets of $\bbC\times\bbC$,
\[
\lim_{n\to\infty}\frac{\cK_{[0,n]}\left(\xi+\frac{z}{\tau_n},\xi+\frac{w}{\tau_n}\right)}{\tau_n}=\K_{1}(z,w).
\]
\end{theorem}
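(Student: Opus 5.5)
The plan is to read $M_{[0,n]}(z)=T_{[0,n]}(\xi)^{-1}T_{[0,n]}(z)$ as the transfer matrix of a canonical system whose Weyl function is $m_+$, as recalled after \eqref{eqnMatrixForward}. Then the normal-limit hypothesis \eqref{mplusnormallimit} becomes convergence of rescaled Weyl functions to a constant. Finally, continuity of the Weyl correspondence for canonical systems gives convergence of the transfer matrices.

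\emph{Step 1: build the canonical system.} Differentiating $M_{[0,n]}$ in $z$ at $z=\xi$ shows that the increment from $n$ to $n+1$ is a rank-one Hamiltonian piece. Its direction is $v_n=T_{[0,n]}(\xi)^{-1}$ applied to a fixed unit vector, and its mass is $|v_n|^2=p_n(\xi)^2+q_n(\xi)^2$, so the accumulated mass up to $n$ is exactly $\tau_n=\tr\cK_{[0,n]}(\xi,\xi)$. Reparametrize by arclength to get a trace-normed Hamiltonian $H$ on $[0,\infty)$, with the spectral parameter shifted so that $z-\xi$ plays the role of the canonical-system energy. In this form $M_{[0,n]}(\xi+\zeta)$ is the canonical-system transfer matrix at time $\tau_n$ and energy $\zeta$. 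The system is limit point because $J_+$ is, and its Weyl function is $m_+(\xi+\zeta)$, by the normalization remark following \eqref{eqnMatrixForward}.

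\emph{Step 2: rescale.} Set $H_n(t)=H(\tau_n t)$, which is again trace normed. Its transfer matrix at time $1$ and energy $z$ is $M_{[0,n]}(\xi+z/\tau_n)$, and its Weyl function is $m_n(z)=m_+(\xi+z/\tau_n)$, since scaling time by $\tau_n$ scales energy by $1/\tau_n$. For fixed $z\in\bbC_+$, the points $\xi+z/\tau_n$ approach $\xi$ within a Stolz angle. Because $m_+$ is a Herglotz function, the normal limit \eqref{mplusnormallimit} upgrades to a nontangential limit by a Lindelöf-type argument. Hence $m_n\to\eta$ locally uniformly on $\bbC_+$.

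\emph{Step 3: identify the limit and pass to transfer matrices.} The constant function $\eta$ is the Weyl function of the constant trace-normed Hamiltonian $\mathring H_\eta$; this is a direct check using $e^{ztj\mathring H_\eta}$ and $\det\mathring H_\eta=h_\eta^2$. By de Branges' uniqueness theorem together with the compactness and continuity of the Weyl map for trace-normed limit-point Hamiltonians in the weak-$*$ topology, we get $H_n\to\mathring H_\eta$ weakly on $[0,\infty)$. Continuous dependence of solutions of $\partial_t M=zjHM$ on weakly convergent $L^1_{\loc}$ coefficients then gives $M_{[0,n]}(\xi+z/\tau_n)\to\M_1(z)$ uniformly on compact sets of $\bbC$. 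Evaluation at the single time $t=1$ is harmless because the limit Hamiltonian carries no atoms.

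\emph{Step 4: the kernel.} The kernel statement follows from \eqref{eq:4} with $k=0$, using $T_{[0,0]}=I$. Substituting $z\mapsto\xi+z/\tau_n$ and $w\mapsto\xi+w/\tau_n$ produces a denominator $(\ol w-z)/\tau_n$, so dividing by $\tau_n$ gives $\bigl(M^*jM-j\bigr)/(\ol w-z)$ evaluated at the rescaled arguments. Uniform convergence of $M$ yields convergence of this quotient off the diagonal. The removable singularity at $\ol w=z$ is handled by the maximum principle, or equivalently by Cauchy estimates, since the numerators converge as entire functions. This gives $\K_1(z,w)$.

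The main obstacle is Step 3: justifying that convergence of Weyl functions forces convergence of the transfer matrices at the finite time $t=1$. This is where the continuity of the inverse spectral map for canonical systems is essential, and I would quote it rather than reprove it.
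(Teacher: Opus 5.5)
Your proposal is correct. It is essentially the argument of \cite{EichLukSimanek}, which the paper imports without proof: renormalize at $\xi$ to get a trace-normed canonical system with Weyl function $m_+(\xi+\cdot)$, rescale time by $\tau_n$ so that (after the Lindel\"of upgrade of the normal limit) $m_+(\xi+z/\tau_n)\to\eta$, and use the homeomorphism property of the Weyl map to force $H(\tau_n\,\cdot)\to\mathring{H}_\eta$ and hence convergence of the transfer matrices; tested on $[0,1]$, the same weak convergence gives the averaged limits of $p_j^2$, $q_j^2$, $p_jq_j$ quoted in Lemma~\ref{lemmaFloquetTauProduct}.

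One cosmetic slip in Step~1: with your convention $\partial_t M=zjHM$, the rank-one direction on the $n$-th step is $T_{[0,n]}(\xi)^{T}\binom{1}{0}=\binom{p_n(\xi)}{q_n(\xi)}=-j\,T_{[0,n]}(\xi)^{-1}\binom{0}{1}$, i.e.\ your vector rotated by $j$ (and not the paper's $v_n$), but its squared length is still $p_n(\xi)^2+q_n(\xi)^2$, so nothing downstream changes.
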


We will prove scaling limits of CD kernels for $J_{[k,n]}$, defined by
\[
K_{[k,n]}(z,w) = \binom 10^* \frac{T_{[k,n]}(w)^*j T_{[k,n]}(z) - j}{\ol w - z}   \binom 10.
\]
We will repeatedly use the vectors
\begin{align}\label{eq:vk}
v_k=T_{[0,k]}(\xi)^{-1}\begin{pmatrix}
        1 \\ 0
    \end{pmatrix}.
\end{align}
Note that $v_k \in \bbR^2 \setminus \{ \binom 00 \}$. We will first work along suitable subsequences:

\begin{lemma}
Fix $\xi \in \bbR$ such that \eqref{mplusnormallimit} holds. 
Assume that $k_l,n_l$ are such that 
\[
\lim_{l\to\infty}\frac{\tau_{k_l}}{\tau_{n_l}}=\sigma\in [0,1),
\]
 the sequence $k_l$ is bounded below,  $n_l \to \infty$ as $l\to\infty$, and the limit
\begin{align*}
    \lim_{l\to\infty}\frac{v_{k_l}}{\|v_{k_l}\|}
\end{align*}
exists. Then
\begin{align}\label{eq:Halfline2limit}
\lim_{\ell\to\infty}&\frac {K_{[k_l,n_l]}\left(\xi + \frac{z}{h_\eta (\tau_{n_l} - \tau_{k_l})}, \xi + \frac{w}{h_\eta (\tau_{n_l} - \tau_{k_l})} \right)}{(\tau_{n_l}-\tau_{k_l}) v_{k_l}^* \mathring{H}_\eta v_{k_l}}
 = \frac{\sin(\ol w - z)}{\ol w - z}.    
\end{align}
\end{lemma}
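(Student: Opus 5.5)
The plan is to reduce everything to the matrix scaling limit of Theorem~\ref{thm:ELS1} through the factorization \eqref{eq:4}.

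\textbf{Step 1: rewrite the scalar kernel.} Since $\binom 10^* (T_{[0,k]}(\xi)^{-1})^* = v_k^*$, formula \eqref{eq:4} gives
\[
K_{[k,n]}(z,w) = v_k^* \, \frac{M_{[k,n]}(w)^* j M_{[k,n]}(z) - j}{\ol w - z}\, v_k .
\]
From $T_{[k,n]}(z) = T_{[0,n]}(z)T_{[0,k]}(z)^{-1}$ and \eqref{eq:2} we also get
\[
M_{[k,n]}(z) = M_{[0,n]}(z)\, M_{[0,k]}(z)^{-1}.
\]
So it suffices to find the limit of $M_{[k_l,n_l]}$ at the scaled spectral parameter. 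We then pass to the kernel by an analyticity argument.

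\textbf{Step 2: the limit of $M_{[k_l,n_l]}$.} Write
\[
\xi + \frac{z}{h_\eta(\tau_{n_l}-\tau_{k_l})} = \xi + \frac{z_l}{\tau_{n_l}}, \qquad z_l = \frac{z}{h_\eta(1-\tau_{k_l}/\tau_{n_l})} \to \zeta := \frac{z}{h_\eta(1-\sigma)},
\]
uniformly on compacts.

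Theorem~\ref{thm:ELS1} gives $M_{[0,n_l]}(\xi + z_l/\tau_{n_l}) \to \M_1(\zeta)$, using uniform convergence on compacts together with continuity of the limit.

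For the factor $M_{[0,k_l]}$ we split into subsequences.
\begin{itemize}
\item If $k_l$ takes some fixed value $k$ infinitely often, then along that subsequence $\tau_{k_l}/\tau_{n_l}\to 0$, so $\sigma=0$. Also $M_{[0,k]}(\xi+z_l/\tau_{n_l}) \to M_{[0,k]}(\xi) = I = \M_0(\zeta)$ by continuity of the polynomial entries.
\item Otherwise $k_l\to\infty$. Then $\xi + z_l/\tau_{n_l} = \xi + (z_l \tau_{k_l}/\tau_{n_l})/\tau_{k_l}$, and $z_l\tau_{k_l}/\tau_{n_l} \to \sigma\zeta$ uniformly on compacts. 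Theorem~\ref{thm:ELS1} along $k_l$ then gives $M_{[0,k_l]} \to \M_1(\sigma\zeta) = \M_\sigma(\zeta)$.
\end{itemize}
Since bounded-below $k_l$ either repeats values or tends to $\infty$, every subsequence has a further subsequence of one of these two types. This is enough because the claimed limit does not depend on the subsequence.

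The exponentials $e^{tzj\mathring H_\eta}$ commute in $t$, so
\[
M_{[k_l,n_l]} \to \M_1(\zeta)\M_\sigma(\zeta)^{-1} = \M_{1-\sigma}(\zeta) = \cos z\, I + \frac{\sin z}{h_\eta}\, j\mathring H_\eta ,
\]
because $(1-\sigma)\zeta h_\eta = z$. This convergence is uniform on compacts.

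\textbf{Step 3: pass to the kernel.} The numerator $M(w)^*jM(z)-j$ is analytic in $(z,\ol w)$ and vanishes on the diagonal $\ol w=z$. Hence uniform convergence of $M$ on compacts implies uniform convergence of the difference quotient in the rescaled variables, by Cauchy estimates. The original $\ol w - z$ equals the rescaled difference divided by $h_\eta(\tau_{n_l}-\tau_{k_l})$. Using the computation behind \eqref{eq:KernelFormula}, we obtain
\[
K_{[k_l,n_l]}(\cdots) = h_\eta(\tau_{n_l}-\tau_{k_l})\left( v_{k_l}^* \, \frac{j(\cos(\ol w - z)-1) + \frac{\mathring H_\eta}{h_\eta}\sin(\ol w - z)}{\ol w - z}\, v_{k_l} + o(1)\lVert v_{k_l}\rVert^2 \right).
\]

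\textbf{Step 4: normalize by $v_{k_l}$.} Since $v_k$ is real and $j$ is antisymmetric, $v_k^* j v_k = 0$. Hence the main term is $(\tau_{n_l}-\tau_{k_l})\, v_{k_l}^*\mathring H_\eta v_{k_l}\, \sin(\ol w-z)/(\ol w-z)$.

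The main obstacle is that $\lVert v_{k_l}\rVert$ may be unbounded, so the $o(1)\lVert v_{k_l}\rVert^2$ error must be controlled relative to the denominator. This is where the hypothesis on $v_{k_l}/\lVert v_{k_l}\rVert$ enters. Dividing by $\lVert v_{k_l}\rVert^2$, the unit vectors converge to some $u$. Since $\mathring H_\eta$ is positive definite (trace $>0$ and $\det = h_\eta^2>0$), we have $v_{k_l}^*\mathring H_\eta v_{k_l} \ge c\lVert v_{k_l}\rVert^2$ with $c>0$. Therefore the error is $o(1)$ relative to the denominator, and \eqref{eq:Halfline2limit} follows uniformly on compacts.
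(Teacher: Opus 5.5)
Your proof is correct and follows the paper's route: factor $K_{[k_l,n_l]}$ through $v_{k_l}$ via \eqref{eq:4}, apply Theorem~\ref{thm:ELS1} separately to $M_{[0,n_l]}$ and $M_{[0,k_l]}$ at the common scale, divide to get $\M_{1-\sigma}(\zeta)$, and use $v^*jv=0$ to isolate the $\mathring H_\eta$ term. Two of your additions tighten the paper's argument: you treat bounded $k_l$ explicitly, where the paper just says ``likewise'', and you bound the error using positive definiteness of $\mathring H_\eta$. The latter makes the hypothesis that $v_{k_l}/\lVert v_{k_l}\rVert$ converges unnecessary, so that is not really ``where the hypothesis enters''.
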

\begin{proof}
By shifting, without loss of generality we can assume $k_l \ge 0$.    From \eqref{eq:4} we see that
    \[
    K_{[k_l,n_l]}(z,w)=v_{k_l}^*\frac{ M_{[k_l,n_l]}(w)^{*}jM_{[k_l,n_l]}(z) -j}{\overline w-z}v_{k_l}.
    \]
    Let us write $\tau_l=\tau_{n_l}-\tau_{k_l}$, and note that $\tau_{n_l}/\tau_{l}\to 1/(1-\sigma),$ and $\tau_{k_l}/\tau_{l}\to \sigma/(1-\sigma)$. We denote $
D(k,\eta) = v_k^* \mathring{H}_\eta v_k$. Then we have
    \begin{align*}
        &\frac 1{\tau_{l} D(k_l,\eta)}
K_{[k_l,n_l]}\left(\xi + \frac{z}{\tau_{n_l}-\tau_{k_l}}, \xi + \frac{w}{\tau_{n_l}-\tau_{k_l}} \right)=
\\
& \qquad \frac{\|v_{k_l}\|^2}{D(k_l,\eta)} \frac{v_{k_l}^*}{\|v_{k_l}\|} \frac{ M_{[k_l,n_l]}(\xi + \frac{w}{\tau_{l}})^{*}jM_{[k_l,n_l]}(\xi + \frac{z}{\tau_{l}})-j}{\overline w-z}\frac{v_{k_l}}{\|v_{k_l}\|}.
    \end{align*}
By Theorem \ref{thm:ELS1} we have that 
\[
\lim_{l\to\infty}M_{[0,n_l]}\left(\xi + \frac{z}{\tau_{l}}\right)=M_{[0,n]}\left(\xi + \frac{z}{\tau_{n_l}}\frac{\tau_{n_l}}{\tau_l}\right)=\M_{1/(1-\sigma)}(z)
\]
and likewise 
\[
\lim_{l\to\infty}M_{[0,k_l]}\left(\xi + \frac{z}{\tau_{l}}\right)=\M_{\sigma/(1-\sigma)}(z).
\]
Dividing the first statement by the second, we conclude
\[
\lim_{l\to\infty}M_{[k_l,n_l]}\left(\xi + \frac{z}{\tau_{l}}\right) = 
\M_{1/(1-\sigma)}(z)\M_{\sigma/(1-\sigma)}(z)^{-1}=\M_{1}(z).
\]
Therefore, we conclude that 
\[
\lim_{l\to\infty}\frac{ M_{[k_l,n_l]}(\xi + \frac{w}{\tau_{l}})^{*}jM_{[k_l,n_l]}(\xi + \frac{z}{\tau_{l}}) -j}{\overline w-z}=\K_{1}(z,w).
\]
Moreover, due to convergence of $v_{k_l}/\|v_{k_l}\|$ and 
\[
\frac{D(k_l,\eta)}{\|v_{k_l}\|^2}=\left(\frac{v_{k_l}}{\|v_{k_l}\|}\right)^*\mathring{H}_\eta\frac{v_{k_l}}{\|v_{k_l}\|},
\]
a direct computation using \eqref{eq:KernelFormula} shows that 
\[
\frac{\|v_{k_l}\|^2}{D(k_l,\eta)}\left(\frac{v_{k_l}}{\|v_{k_l}\|}\right)^*\K_{1}(z,w)\frac{v_{k_l}}{\|v_{k_l}\|}=v_{k_l}^*\frac{\K_{1}(z,w)}{D(k_l,\eta)}v_{k_l}=\frac{\sin(h_\eta(z-\overline w))}{h_\eta(z-\ol w)}.
\]
Rescaling the variables $z,w$ by a factor of $h_\eta$ finishes the proof. 
\end{proof}

\begin{proof}[Proof of Theorem~\ref{thmOneSidedClock}]
Lemma~\ref{lemmaFloquetTauProduct} implies that as $l\to\infty$, $h_\eta (\tau_{n_l} - \tau_{k_l}) \sim \pi c_{[k_l,n_l]}$. Meanwhile, \eqref{eq:Halfline2limit} implies
\[
K_{[k_l, n_l]}(\xi,\xi) \sim (\tau_{n_l} - \tau_{k_l}) D(k_l, \eta).
\]
Using these two statements, we can restate the conclusion of the previous lemma as
\begin{equation}\label{eqnSineKernelScalingLimit}
\lim_{\ell\to\infty}\frac {K_{[k_l,n_l]}\left(\xi + \frac{z}{c_{[k_l,n_l]}}, \xi + \frac{w}{c_{[k_l,n_l]}} \right)}{ K_{[k_l,n_l]}(\xi,\xi)}
 = \frac{\sin(\pi(\ol w - z))}{\pi(\ol w - z)}.  
\end{equation}
We have concluded that this holds along a sequence such that $\tau_{k_l} / \tau_{n_l}$ converge to some element of $[0,1)$, and $v_{k_l} / \lVert v_{k_l} \rVert$ converges to some element of $S^1 \subset \bbR^2$. Since the value of the limit is independent of $\sigma$, by compactness of $[0,\limsup \tau_{k_l}/\tau_{n_l}] \times S^1$, the scaling limit \eqref{eqnSineKernelScalingLimit} also holds along our sequence.

The limit \eqref{eq:Halfline2Clock} then follows from the Freud-Levin theorem; see e.g. \cite[Theorem 1.2]{AvilaLastSimon} and \cite[Theorem 10.1]{EichLukWor}.
\end{proof}

\section{Two-sided scaling limits} \label{sectionTwoSided}

In this section we discuss phenomena that depend on the behavior at both $\pm\infty$. Thus, in addition to the transfer matrices $T_{[k,n]}(z)$ defined before, we consider the reversed transfer matrices
$
    \overleftarrow{T}_{[n,k]}(z)
$
acting backwards from solutions at $n$ to solutions at $k$, $n\geq k$, so that 
\begin{equation}\label{eqnMatrixBackward}
\overleftarrow{T}_{[n,k]}(z)\begin{pmatrix}
    u_{n}\\-a_{n}u_{n+1}
\end{pmatrix}=\begin{pmatrix}
    u_{k}\\-a_{k}u_{k+1}
\end{pmatrix}
\end{equation}
for every formal eigensolution $u$ of the Jacobi recursion at energy $z$. With this convention and a renormalization at zero energy, the transfer matrices correspond to  the canonical system which has Weyl function equal to $m_-$. In particular, for any $\tau \in \ol{\bbC_+}$,
\[
\overleftarrow{T}_{[0,k]}(z)^{-1} * \tau \to m_-(z), \qquad k\to -\infty.
\]
By comparing the matrix recursions \eqref{eqnMatrixForward}, \eqref{eqnMatrixBackward}, we see that 
\begin{align*}
    T_{[k,n]}(z)=\begin{pmatrix}
        0&-1/a_n\\-a_n&0
    \end{pmatrix}
    \vl{T}_{[n,k]}(z)^{-1}\begin{pmatrix}
        0& -1/a_k\\-a_k& 0
    \end{pmatrix}.
\end{align*}
We begin with a rescaling limit of the matrix kernel 
\begin{align*}
    \cK_{[k,n]}(z,w)=\frac{T_{[k,n]}(w)^*jT_{[k,n]}(z)-j}{\overline w-z}
\end{align*}

\begin{lemma}\label{lem:scaling}
Assume that $m_\pm$ have normal limits at $\xi$,
\begin{align}
    \label{eq:5}
\lim_{y\to 0}m_{\pm}(\xi + iy)=\eta_{\pm} \in \bbC_+.
\end{align}
Assume that $k_l \to -\infty$, $n_l \to +\infty$ are such that
\begin{align}\label{eq:convergence}
\lim_{l\to\infty} \frac{ c_{[0,n_l]} }{ c_{[k_l, n_l]}} = \sigma \in [0,1]    
\end{align}
exists. Then for
\[
U_k = \begin{pmatrix}
     0&-1/a_{k}\\-a_{k}&0
 \end{pmatrix} \vl T_{[0,k]}(\xi)
 \begin{pmatrix}
     0&-1/a_{0}\\-a_{0}&0
 \end{pmatrix}
\]
and
\[
\mathring{L}(z) = \mathring{M}_{\sigma\pi/h_{\eta_+}}^{\eta_+}(z)\begin{pmatrix}
    0&-1/a_0\\-a_0&0
\end{pmatrix}
 \mathring{M}_{(1-\sigma)\pi/h_{\eta_-}}^{\eta_-}\left(z\right)^{-1}
 \begin{pmatrix}
     0&-1/a_{0}\\-a_{0}&0
 \end{pmatrix}
\]
we have
\begin{align}\label{eqngosnew}
\lim_{ l \to\infty} U_{k_l}^* \frac {\cK_{[k_l,n_l]}\left(\xi + \frac{z}{c_{[k_l,n_l]}}, \xi + \frac{w}{c_{[k_l,n_l]}} \right)}{c_{[k_l,n_l]}}
U_{k_l} = \frac{\mathring{L}(w)^*j \mathring{L}(z) - j}{\ol w-z}.  
\end{align}
\end{lemma}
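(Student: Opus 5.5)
Write $P_k=\begin{pmatrix}0&-1/a_k\\-a_k&0\end{pmatrix}$ and split the transfer matrix at the origin. Since $T_{[k,n]}(z)=T_{[0,n]}(z)T_{[k,0]}(z)$ with $T_{[k,0]}(z)=T_{[0,k]}(z)^{-1}=P_0\vl T_{[0,k]}(z)^{-1}P_k$ for $k\le 0$, we get
\[
T_{[k,n]}(z)U_k=T_{[0,n]}(z)\,P_0\,\vl T_{[0,k]}(z)^{-1}\,\vl T_{[0,k]}(\xi)\,P_0 .
\]
Insert $T_{[0,n]}(\xi)T_{[0,n]}(\xi)^{-1}$ on the left. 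This gives
\[
T_{[k,n]}(z)U_k=T_{[0,n]}(\xi)\,M_{[0,n]}(z)\,P_0\,\vl M_{[0,k]}(z)^{-1}P_0,
\]
where $\vl M_{[0,k]}(z)=\vl T_{[0,k]}(\xi)^{-1}\vl T_{[0,k]}(z)$ is the backward analog of $M_{[0,n]}$. We will use three facts:
\begin{itemize}
\item $T_{[0,n]}(\xi)$ is real with determinant $1$, so $T_{[0,n]}(\xi)^*jT_{[0,n]}(\xi)=j$ and it drops out of the kernel.
\item $U_k$ is real, so $U_k^*=U_k^T$.
\item $U_k^*jU_k=j$, since each factor preserves $j$ up to sign and the two $P$ factors contribute $(-1)^2$. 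A direct check gives $P^Tj P=-j$ for $\det P=-1$.
\end{itemize}
Hence $U_k^*\cK_{[k,n]}(z,w)U_k=\big(L_{k,n}(w)^*jL_{k,n}(z)-j\big)/(\ol w-z)$ with $L_{k,n}(z)=M_{[0,n]}(z)P_0\vl M_{[0,k]}(z)^{-1}P_0$. It therefore suffices to prove $L_{k_l,n_l}(\xi+z/c_{[k_l,n_l]})\to\mathring L(z)$ locally uniformly in $z$. Locally uniform convergence of these entire matrix functions then gives convergence of the difference quotient, including near the diagonal, by the Cauchy estimates. We must also account for the prefactor $1/c$ versus the variables: the kernel in rescaled variables carries a factor $c$ from the denominator $\ol w-z$, which cancels the $1/c$.

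For the forward factor, apply Theorem~\ref{thm:ELS1}. By Lemma~\ref{lemmaFloquetTauProduct}, $\pi c_{[0,n]}\sim h_{\eta_+}\tau_n$, so
\[
\frac{z}{c_{[k_l,n_l]}}=\frac{z}{\tau_{n_l}}\cdot\frac{\tau_{n_l}}{c_{[0,n_l]}}\cdot\frac{c_{[0,n_l]}}{c_{[k_l,n_l]}}, \qquad \frac{\tau_{n_l}}{c_{[0,n_l]}}\cdot\frac{c_{[0,n_l]}}{c_{[k_l,n_l]}}\to \frac{\pi\sigma}{h_{\eta_+}} .
\]
The limit $\M_1(tz)=\mathring M^{\eta_+}_t(z)$ holds uniformly on compacts, so we get $M_{[0,n_l]}(\xi+z/c)\to\mathring M^{\eta_+}_{\sigma\pi/h_{\eta_+}}(z)$. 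This also covers $\sigma=0$, where the limit is $I$.

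For the backward factor, apply Theorem~\ref{thm:ELS1} to the reflected half-line matrix $J_-$, whose transfer matrices are the $\vl T$ and whose Weyl function is $m_-$ with normal limit $\eta_-$. The corresponding scale is $\vl\tau_k$, and the analog of Lemma~\ref{lemmaFloquetTauProduct} for $\psi^-$ gives $\pi c_{[k,0]}\sim h_{\eta_-}\vl\tau_k$. By additivity, $c_{[k,0]}/c_{[k,n]}\to 1-\sigma$. This yields $\vl M_{[0,k_l]}(\xi+z/c)\to\mathring M^{\eta_-}_{(1-\sigma)\pi/h_{\eta_-}}(z)$, and since the limit has determinant $1$, the inverses converge too. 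Multiplying the limits gives $\mathring L(z)$.

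The main obstacle is bookkeeping: checking that the reflection conventions in \eqref{eqnMatrixBackward} match the hypotheses of Theorem~\ref{thm:ELS1} for $J_-$. The normalization of $\psi^-$ via $-\psi^-_0/(a_0\psi^-_1)=\eta_-$ must play the role that $\psi$ plays for $J_+$, so that the $c$-scale on the negative side is exactly $\vl\tau_k h_{\eta_-}/\pi$ asymptotically. This is a relabeling $n\mapsto 1-n$ of Lemma~\ref{lemmaFloquetTauProduct}. A second, smaller point concerns the endpoint cases $\sigma\in\{0,1\}$: there one of the ratios $c_{[0,n]}/c_{[k,n]}$ tends to $0$ while $n\to\infty$. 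This is harmless because the convergence in Theorem~\ref{thm:ELS1} is uniform on compacts and $t z$ then stays in a compact set.
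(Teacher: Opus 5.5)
Your proposal is correct and follows the paper's proof essentially step by step. It uses the same factorization $T_{[k,n]}(z)U_k=T_{[0,n]}(\xi)L_{k,n}(z)$ with $L_{k,n}=M_nP_0\vl M_k^{-1}P_0$, the same $j$-invariance of $U_k$ and $T_{[0,n]}(\xi)$ to obtain the conjugated kernel identity, and the same conversion of $\tau^\pm$ to $c_{[k_l,n_l]}$ via Lemma~\ref{lemmaFloquetTauProduct} before applying Theorem~\ref{thm:ELS1} on each side; your extra remarks (Cauchy estimates for the difference quotient near the diagonal, the explicit $\sigma\in\{0,1\}$ cases, and the reflection bookkeeping for $J_-$) only spell out steps the paper leaves implicit.
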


\begin{proof}
 Note that for $k < 0 \le n$ we have
\begin{align}
T_{[k,n]}(z) & = T_{[0,n]}(z)T_{[k,0]}(z)  \\
& = T_{[0,n]}(z) \begin{pmatrix}
        0&-1/a_0\\-a_0&0
    \end{pmatrix}
    \vl{T}_{[0,k]}(z)^{-1}\begin{pmatrix}
        0& -1/a_k\\-a_k& 0
    \end{pmatrix}.   \label{eq:connection} 
\end{align}

We will introduce the matrices
\begin{align*}
M_n(z) & = T_{[0,n]}(\xi)^{-1}T_{[0,n]}(z), \\
\vl M_{k}(z) & =\vl T_{[0,k]}(\xi)^{-1}\vl T_{[0,k]}(z).
\end{align*}
Then we can represent
\[
T_{[k,n]}(z) U_k = T_{[0,n]}(\xi) L_{k,n}(z)
\]
where
\[
L_{k,n}(z)=M_{n}(z)
\begin{pmatrix}
    0&-1/a_0\\-a_0&0
\end{pmatrix}
\vl M_{k}(z)^{-1}
\begin{pmatrix}
     0&-1/a_{0}\\-a_{0}&0
 \end{pmatrix}.
\]
Using $U_k, T_{[0,n]}(\xi) \in \SL(2,\bbR)$ we conclude 

\begin{align}
 U_k^* \cK_{[k,n]}(z,w) U_k
=
\frac{L_{k,n}(w)^* j L_{k,n}(z) - j}{\overline w-z}. \label{eqnMatrixKernelConjugated}
\end{align}
As in the previous section, 
\begin{align}\label{eq:1}
    \lim_{n\to\infty}M_n(\xi + z/\tau^+_n)=\mathring{M}_{1}^{\eta_+}(z),\quad \lim_{k\to\infty}\vl M_{k}(\xi + z/\tau^-_k)=\mathring{M}_{1}^{\eta_-}(z)
\end{align}
where $\tau_n^\pm$ are the scaling sequences corresponding to $J_\pm$. We will also have to convert this to our scaling sequence by using Lemma~\ref{lemmaFloquetTauProduct},
\begin{align*}
\lim_{l\to\infty} \frac{\tau_{n_l}^+}{c_{[k_l,n_l]}} & = \sigma \lim_{l\to\infty} \frac{\tau_{n_l}^+}{c_{[0,n_l]}} = \frac{\sigma \pi}{h_{\eta_+}}, \\
\lim_{l\to\infty} \frac{\tau_{n_l}^-}{c_{[k_l,n_l]}} & = (1-\sigma) \lim_{l\to\infty} \frac{\tau_{n_l}^-}{c_{[k_l,0]}} = \frac{(1-\sigma)\pi}{h_{\eta_-}}.
\end{align*}
Combining those with \eqref{eq:1}, we find
\[
\lim_{l\to\infty} L_{k_l,n_l}\left(\xi + \frac{z}{c_{[k_l,n_l]}}\right) = \mathring{L}(z),
\]
and combining this with \eqref{eqnMatrixKernelConjugated} proves  \eqref{eqngosnew}.
\end{proof}

The limit kernels in \eqref{eqngosnew} combine the contributions towards $\pm \infty$, as seen from the form of $\mathring{L}$. The parameter $\sigma$ quantifies the influence of each direction; extreme cases are $\sigma=0$ where only $J_-$ affects the limit and $\sigma=1$ where only $J_+$ affects the limit.
 To the best of our knowledge, these kernels have not appeared in the literature before (besides the special case $\sigma=1$ \cite{EichLukSimanek}).

The following lemma allows us to simplify the limit kernels and explains how the reflectionless condition appears in our analysis:

\begin{lemma}\label{lem:comp}
Let $\eta \in \bbC_+$ and $a > 0$ and denote
\[
\zeta = \frac{1}{a^2\overline\eta}.
\]
Then
\begin{align*}
\begin{pmatrix}
    0&1/a\\a&0
\end{pmatrix}
j\frac{\mathring{H}_{\eta}}{h_{\eta}} = - j \frac{\mathring{H}_{ \zeta}}{h_{\zeta}}\begin{pmatrix}
    0&1/a\\a&0
\end{pmatrix}
\end{align*}
and for $s \ge 0$,
\begin{align*}
\begin{pmatrix}
    0&-1/a \\-a &0
\end{pmatrix}
\mathring{M}_{s/h_{\eta}}^{\eta}(z)^{-1}&=\begin{pmatrix}
    0&-1/a \\-a &0
\end{pmatrix}
\mathring{M}_{s/h_{\zeta}}^{\zeta}(z)
\end{align*}
\end{lemma}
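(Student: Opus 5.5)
The plan is to prove both identities by direct computation in closed form, using only the explicit formula
\[
\M_{t}(z)=\cos(tzh_\eta)I+ \frac{\sin(tzh_\eta)}{h_\eta}j\mathring{H}_\eta
\]
derived above. Throughout, write $P=\begin{pmatrix} 0&1/a\\a&0\end{pmatrix}$, so that the matrix $\begin{pmatrix} 0&-1/a\\-a&0\end{pmatrix}$ in the statement is $-P$.

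\textbf{First identity.} I will compute both sides entrywise. From the definitions, $\mathring H_\eta/h_\eta = \frac{1}{\Im\eta}\begin{pmatrix}1&-\Re\eta\\-\Re\eta&|\eta|^2\end{pmatrix}$. Hence
\[
j\frac{\mathring H_\eta}{h_\eta}=\frac{1}{\Im\eta}\begin{pmatrix}\Re\eta&-|\eta|^2\\1&-\Re\eta\end{pmatrix},
\qquad
P j\frac{\mathring H_\eta}{h_\eta}=\frac{1}{\Im\eta}\begin{pmatrix}1/a&-\Re\eta/a\\a\Re\eta&-a|\eta|^2\end{pmatrix}.
\]
Since $\zeta=\eta/(a^2|\eta|^2)$, we have
\[
\Re\zeta=\frac{\Re\eta}{a^2|\eta|^2},\qquad \Im\zeta=\frac{\Im\eta}{a^2|\eta|^2}>0,\qquad |\zeta|^2=\frac{1}{a^4|\eta|^2}.
\]
In particular $\zeta\in\bbC_+$, so $\mathring H_\zeta$ and $h_\zeta$ are defined. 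Right multiplication by $P$ swaps the columns and rescales them, which gives
\[
-j\frac{\mathring H_\zeta}{h_\zeta}P=\frac{1}{\Im\zeta}\begin{pmatrix}a|\zeta|^2&-\Re\zeta/a\\a\Re\zeta&-1/a\end{pmatrix}.
\]
Substituting the three formulas for $\zeta$ and the factor $1/\Im\zeta=a^2|\eta|^2/\Im\eta$ reproduces each of the four entries of $Pj\mathring H_\eta/h_\eta$. I expect no difficulty here; it is a bookkeeping check.

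\textbf{Second identity.} With $t=s/h_\eta$ the argument of the trigonometric functions is $tzh_\eta=sz$, so
\[
\mathring M^\eta_{s/h_\eta}(z)=\cos(sz)I+\sin(sz)\,j\frac{\mathring H_\eta}{h_\eta}.
\]
Its inverse is
\[
\mathring M^\eta_{s/h_\eta}(z)^{-1}=\mathring M^\eta_{s/h_\eta}(-z)=\cos(sz)I-\sin(sz)\,j\frac{\mathring H_\eta}{h_\eta},
\]
because $(j\mathring H_\eta/h_\eta)^2=-I$; this follows from $\det(\mathring H_\eta/h_\eta)=1$ and $\tr(j\mathring H_\eta)=0$.

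Multiplying on the left by $-P$ and applying the first identity moves $P$ across:
\[
-P\,\mathring M^\eta_{s/h_\eta}(z)^{-1}=\cos(sz)(-P)+\sin(sz)\,Pj\frac{\mathring H_\eta}{h_\eta}=\Bigl(\cos(sz)I+\sin(sz)\,j\frac{\mathring H_\zeta}{h_\zeta}\Bigr)(-P).
\]
The bracket is exactly $\mathring M^\zeta_{s/h_\zeta}(z)$, with the same argument $sz$, because the time parameter $s/h_\zeta$ is normalized by $h_\zeta$.

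\textbf{Expected obstacle.} The only delicate point is the placement of the factor $-P$. The computation yields the intertwining relation
\[
-P\,\mathring M^\eta(z)^{-1}=\mathring M^\zeta(z)\,(-P),
\]
with the antidiagonal matrix on the right of $\mathring M^\zeta$. Read literally, with $-P$ on the left of both sides, the identity would force $\mathring M^\eta(z)^{-1}=\mathring M^\zeta(z)$, which fails in general. So the statement should be interpreted, and proved, in the intertwined form.

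This form is also what is needed in $\mathring L$. With $a=a_0$, $\eta=\eta_-$, $s=(1-\sigma)\pi$, the relation turns $\mathring L(z)$ into $\mathring M^{\eta_+}_{\sigma\pi/h_{\eta_+}}(z)\,\mathring M^{\zeta}_{(1-\sigma)\pi/h_\zeta}(z)\,P^2$, and $P^2=I$. Under \eqref{eqnReflectionlessJacobi}, $\zeta=\eta_+$, so the two factors combine into a single $\mathring M^{\eta_+}$.
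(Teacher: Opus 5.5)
Your proof is correct and follows essentially the paper's route. The first identity is a direct matrix computation: you compare entries using $\mathring H_\eta/h_\eta=(\Im\eta)^{-1}\begin{pmatrix}1&-\Re\eta\\-\Re\eta&|\eta|^2\end{pmatrix}$, whereas the paper computes $jPj\mathring H_\eta P$ and tracks the ratio $h_\zeta/h_\eta$ separately. The second identity is then obtained from the first via the explicit cosine/sine form of the matrix exponential. You are also right that the second identity must be read in the intertwined form $-P\,\mathring M^{\eta}_{s/h_\eta}(z)^{-1}=\mathring M^{\zeta}_{s/h_\zeta}(z)\,(-P)$. This is what the paper's exponentiation argument actually yields, and it is the form used to collapse $\mathring L(z)$ to $\mathring M^{\eta_+}_{\sigma\pi/h_{\eta_+}}(z)\,\mathring M^{\zeta}_{(1-\sigma)\pi/h_\zeta}(z)$.
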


\begin{proof}
First we note that $\Im \eta / \lvert \eta \rvert = \Im \zeta / \lvert \zeta\rvert$ so
\[
\frac{h_{\zeta}}{h_{\eta}} = \frac{|\zeta|}{|\eta|}\frac{1+|\eta|^2}{1+|\zeta|^2}.
\]
Then the first claim follows from the calculation
\begin{align*}
    \frac{1}{|\eta|}\begin{pmatrix}
        0&-1\\1&0
    \end{pmatrix}
    \begin{pmatrix}
        0&1/a\\a&0
    \end{pmatrix}
    \begin{pmatrix}
        0&-1\\1&0
    \end{pmatrix}
    \begin{pmatrix}
        1&-\Re \eta\\-\Re\eta &|\eta|^2
    \end{pmatrix}
    \begin{pmatrix}
        0&1/a\\a&0
    \end{pmatrix}
    =
    \frac{1}{|\eta|}
    \begin{pmatrix}
        a^2|\eta|^2 &-\Re\eta \\-\Re\eta &1/a^2       
    \end{pmatrix}\\
    =
    a^2|\eta|
    \begin{pmatrix}
        1&-\frac{\Re\eta}{a^2|\eta|^2}\\-\frac{\Re\eta}{a^2|\eta|^2}&\frac{1}{a^4|\eta|^2}
    \end{pmatrix}
    =a^2|\eta|
    \begin{pmatrix}
         1&-\Re \zeta\\-\Re\zeta &|\zeta|^2
    \end{pmatrix}
\end{align*}
and the second claim follows from this by taking matrix exponentials.
\end{proof}

Using this,
the matrix limit kernel of Lemma~\ref{lem:scaling} can be rewritten as
\[
\mathring{L}(z) = \mathring{M}_{\sigma\pi/h_{\eta_+}}^{\eta_+}(z) 
 \mathring{M}_{(1-\sigma)\pi/h_{\zeta}}^{\zeta}\left(z\right)
\]
where $\zeta = 1 / (a_0^2 \ol{ \eta_-})$. This shows that the limit kernel corresponds to a canonical system with a piecewise constant Hamiltonian with two intervals of constancy. In particular, if $\eta_+ = \zeta$, we obtain a limit kernel corresponding to a constant Hamiltonian, and this is the case in the proof of Theorem~\ref{thmTwoSidedClock1}.

\begin{proof}[Proof of Theorem~\ref{thmTwoSidedClock1}]
Consider the vectors $U_k^{-1} \binom 10$, and denote $r_k = \lVert U_k^{-1} \binom 10 \rVert$. 
First, assume that we are working along a subsequence such that $c_{[0,n_l]} / c_{[k_l,n_l]} \to \sigma$ and that
\begin{equation}\label{eqnAssumedSubseq}
r_{k_l}^{-1} U_{k_l}^{-1} \binom 10 \to \binom{\cos\phi}{\sin\phi}
\end{equation}
for some $\phi$.  Starting from Lemma 4.1, by right multiplication by \eqref{eqnAssumedSubseq} and left multiplication by its adjoint, we conclude
\begin{align}\label{eqngos}
\lim_{ l \to\infty}&\frac {K_{[k_l,n_l]}\left(\xi + \frac{z}{c_{[k_l,n_l]}}, \xi + \frac{w}{c_{[k_l,n_l]}} \right)}{r_{k_l}^2 c_{[k_l,n_l]}}
 = \binom{\cos\phi}{\sin\phi}^* \frac{\mathring{L}(w)^*j\mathring{L}(z)}{\ol w-z}\binom{\cos\phi}{\sin\phi}.
\end{align}
Using Lemma~\ref{lem:comp}, $\mathring{L}(z)$ simplifies as
\begin{align*}
\mathring{L}(z) & =\mathring{M}_{\sigma \pi / h_{\eta_+}}^{\eta_+}\left(z\right)
 \mathring{M}_{(1-\sigma)\pi / h_{\eta_+}}^{\eta_+}\left(z\right)
 = \mathring{M}_{\pi / h_{\eta_+}}^{\eta_+}\left(z\right)
\end{align*}
so \eqref{eqngos} simplifies to
\[
\lim_{ l \to\infty} \frac {K_{[k_l,n_l]}\left(\xi + \frac{z}{c_{[k_l,n_l]}}, \xi + \frac{w}{c_{[k_l,n_l]}} \right)}{r_{k_l}^2 c_{[k_l,n_l]}}
 = \binom{\cos\phi}{\sin\phi}^* \frac{\frac{\mathring{H}_{\eta_+}}{h_{\eta_+}} \sin(\pi (\ol w - z))}{\ol w-z} \binom{\cos\phi}{\sin\phi}.
\]
Renormalizing this limit by its value at $z=w=0$ we conclude
\begin{equation}\label{eqnBulkUniv5}
\lim_{ l \to\infty} \frac {K_{[k_l,n_l]}\left(\xi + \frac{z}{c_{[k_l,n_l]}}, \xi + \frac{w}{c_{[k_l,n_l]}} \right)}{K_{[k_l,n_l]}(\xi,\xi)}
 = \frac{\sin(\pi (\ol w - z))}{\pi(\ol w-z)}.  
\end{equation}
Since this limit is independent of $(\sigma, \phi) \in [0,1] \times (\bbR / 2\pi\bbZ)$, compactness allows us to conclude \eqref{eqnBulkUniv5} along the original sequence of $k_l, n_l$. Now the conclusion about eigenvalue spacing follows again by the Freud--Levin theorem.
\end{proof}

\section{Destroying clock spacing by a rank one perturbation} \label{sectionOneCoefficientChange}

In this section, we will show that clock spacing can be removed by the change of a single off-diagonal Jacobi coefficient. We will illustrate this by proving Example~\ref{xmplAlmostFree}, which is to a great extent  solvable.

The free Jacobi matrix ($a_n= 1$, $b_n = 0$ for all $n$) is reflectionless with spectrum $[-2,2]$; in fact, its half-line Weyl functions are 
\[
m_\pm(z) = \frac{-z+\sqrt{z^2-4}}2
\]
so the condition \eqref{eqnReflectionlessJacobi} is satisfied pointwise at every $\xi \in (-2,2)$. In particular, it has two-sided clock spacing at $\xi = 0$, as described by Theorem~\ref{thmTwoSidedClock1}. We will show that this behavior is destroyed by a change in the coefficient $a_0$.

\begin{proof}[Proof of Example~\ref{xmplAlmostFree}]
For $\lambda \in (-2,2)$, we substitute $\lambda= 2\cos k$, $k\in (0,\pi)$. Solving the eigenfunction equation
\begin{equation}\label{eqnAlmostFree}
J_{[-n+1,n]} u = \lambda u,
\end{equation}
the truncations correspond to boundary conditions $u_{-n} = u_{n+1}=0$, and by solving the recursion \eqref{eqnAlmostFree} on $[-n,-1]$ and on $[2,n+1]$, the eigenfunction must have the form
\[
u_j = \begin{cases} A \sin((j+n)k) & -n+1 \le j \le 0 \\
B \sin((n+1-j)k) & 1 \le j \le n
\end{cases}
\]
By symmetry, $(u_{1 - j})_{j=-n+1}^n$ is also an eigenfunction. By simplicity of the spectrum, we must have $B = \pm A \neq 0$. Finally, evaluating \eqref{eqnAlmostFree} at positions $0$ and $1$, we have two compatibility equations, which are both equivalent to
\[
\sin((n-1)k) \pm a_0 \sin(nk) = 2\cos k \sin(nk).
\]
This rewrites as $\pm a_0 \sin(nk) = \sin((n+1)k)$, so we conclude that
\[
\sigma(J_{[-n+1,n]} ) \cap (-2,2) = \left\{ 2 \cos k\mid k \in (0,\pi), \frac{\sin((n+1)k)}{\sin(nk)} = \pm a_0 \right\}
\]
(the right-hand side is understood as the union over the sets for the two choices of $\pm$ sign).

To study zeros near $\xi=0$, substitute $k = \pi/2 - t/n$. For fixed $t$, this will describe the locations of zeros at $2\cos k = 2 \sin(t/n) \sim  2t/n$ as $n\to\infty$.

Assume for a moment that $n$ is even. Then
\[
\frac{\sin((n+1)k)}{\sin(nk)} = -\frac{\cos((1+1/n)t)}{\sin t} = \pm a_0.
\]
The entire functions
\[
f_n(z) = \cos((1+1/n)z) \pm a_0 \sin z
\]
converge to $f(z) = \cos z \pm a_0 \sin z$ uniformly on compacts, so by Hurwitz's theorem, the zeros of $f_n$ converge to those of $f$ and we conclude
\[
\lim_{\substack{n\to\infty \\ n\text{ even}}} \frac n2 \xi_j^{[-n+1,n]} = 
\begin{cases}  l\pi + \pi/2 - \arctan a_0 & j=2l+1 \\
l\pi - \pi/2 + \arctan a_0 & j = 2l
\end{cases}
\]
Similarly, for $n$ odd, this reduces to solutions of $\tan = \pm a_0$ and we conclude
\[
\lim_{\substack{n\to\infty \\ n\text{ odd}}} \frac n2 \xi_j^{[-n+1,n]} = 
\begin{cases}  l\pi + \arctan a_0 & j=2l+1 \\
l\pi - \arctan a_0 & j = 2l
\end{cases}
\]
In particular, this implies for any fixed $j\in \bbZ$ that \eqref{eqnAlmostFreeOdd}, \eqref{eqnAlmostFreeEven} hold.
\end{proof}

\section{Schr\"odinger operators}\label{sectionSchrodinger}

In the Schr\"odinger setting, we consider the half-line operator
\begin{align}\label{eq:rule}
    H_+=-\frac{d^2}{dx^2}+V(x)
\end{align}
on $L^2([0,\infty))$ for a potential $V\in L_{\loc}^1([0,\infty))$, with Dirichlet boundary condition:
\begin{align*}
    D(H_+)=\{ \psi\in L^2([0,\infty)): \psi,\psi'\in AC_{\loc}([0,\infty)), -\psi''+V\psi\in L^2([0,\infty)),\psi(0)=0\},
\end{align*}
as well as the full line operator $H$ given again by the expression \eqref{eq:rule}, and acting on $L^2(\bbR)$ with domain 
\begin{align*}
    D(H)=\{ \psi\in L^2(\R): \psi,\psi'\in AC_{\loc}(\R), -\psi''+V\psi\in L^2(\R)\}.
\end{align*}
In either case, we define the Dirichlet truncation to an interval $H_{[a,b]}$ by the same expression with domain
\begin{align*}
    D(H_{[a,b]})=\{ \psi\in L^2([a,b]): \psi,\psi'\in AC([a,b]), -\psi''+V\psi \in L^2([a,b]),\psi(a)=\psi(b)=0\}.
\end{align*}

We study the local distribution of eigenvalues of the operator $H_{[a,b]}$ around a point $\xi \in \bbR$: we denote those eigenvalues by
\[
\dots < \xi_{-1}^{[a,b]} < \xi_0^{[a,b]} < \xi \le \xi_{1}^{[a,b]} < \xi_2^{[a,b]} < \dots
\]
We make the following natural modifications of the quantities above, after which the proofs will follow similarly. Let $m_+$ denote the Weyl function associated  with $H_+$. Given a point $\xi\in \R$ so that 
\begin{equation}\label{eqnmnormallimitSchr}
    \lim_{y\downarrow 0}m_+(\xi+iy)=\eta\in \mathbb C_+,
\end{equation}
we take $\psi$ to be a solution to 
\begin{align}\label{eq:eigenvalue}
    -\psi''(x)+V(x)\psi(x)=\xi \psi(x)
\end{align}
satisfying 
\begin{align}\label{normalization}
    \frac{\psi'(0)}{\psi(0)}=\eta.
\end{align}
Then, 
\begin{align*}
    W(\overline{\psi},\psi)=2i\Im(\eta).
\end{align*}
Mirroring the notation introduced above, we define for $a\leq b$,
\begin{align*}
    c_{[a,b]}=\frac{i}{\pi W(\ol \psi,\psi)}\int\limits_{a}^b|\psi(x)|^2dx.
\end{align*}

The fundamental set of solutions at the left endpoint will replace the orthonormal polynomials above; let $\theta(\,\cdot\,,\xi),\phi(\,\cdot\,,\xi)$ be the fundamental solutions of equation \eqref{eq:eigenvalue} normalized at $0$ by
\begin{align}\label{eq:fundamental}
\theta(0,\xi)=1,\quad \theta'(0,\xi)=0,\qquad
\phi(0,\xi)=0,\quad \phi'(0,\xi)=1,
\end{align}
and we define
\[
\tau(L):=\int_0^L \bigl(\theta(x,\xi)^2+\phi(x,\xi)^2\bigr)\,dx.
\]

We set the notation for transfer matrices, 
\begin{align*}
    T_{[0,x]}(z)=\begin{pmatrix}
        \theta'(x,z)&\phi'(x,z)\\\theta(x,z)&\phi(x,z)
    \end{pmatrix}
\end{align*}
and more generally for $a\leq b$, we denote by $T_{[a,b]}$ the transfer matrix propagating solutions $u$ of \eqref{eq:eigenvalue} from $a$ to $b$:
\begin{align*}
    T_{[a,b]}(z)\begin{pmatrix}
    u'(a)\\u(a)
\end{pmatrix}
=\begin{pmatrix}
    u'(b)\\u(b)
\end{pmatrix}.
\end{align*}
We also define the kernel 
\begin{align*}
    \cK_{[a,b]}(z,w)=\frac{T_{[a,b]}(w)^*jT_{[a,b]}(z)-j}{\ol w-z}
\end{align*}
for $a\leq b$.

Finally, we define the quantities as in the discrete setting:
\begin{align}
    v_b=T_{[0,b]}^{-1}(\xi)\begin{pmatrix}
        1\\ 0
    \end{pmatrix}
\end{align}
Then, we have the following continuum analog of Theorem~\ref{thmOneSidedClock}.
\begin{theorem}\label{thmOneSidedClockschrod}
    Assume that at the point $\xi\in\R$ the normal limit \eqref{eqnmnormallimitSchr} holds.
Then, for any sequences $0\le a_l<b_l$ with $b_l\to \infty$ and 
    \begin{align}\label{eq:supassumption}
        \limsup_{l \to\infty}\frac{c_{[0,a_l]}}{c_{[0,b_l]}}<1
    \end{align}
    we have for all $j\in\bbZ$,
    \begin{align}\label{eq:continuumclock}
        \lim_{l\to\infty}c_{[a_l,b_l]}(\xi_{j+1}^{[a_l,b_l]}-\xi_{j}^{[a_l,b_l]})=1.
    \end{align}
\end{theorem}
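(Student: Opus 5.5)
The plan is to transcribe the proof of Theorem~\ref{thmOneSidedClock} into the continuum setting, replacing the Jacobi transfer matrices by the Schr\"odinger transfer matrices $T_{[a,b]}(z)$ and the orthogonal polynomials by the fundamental solutions $\theta,\phi$.

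\textbf{Step 1 (scale comparison).} I first prove the analogue of Lemma~\ref{lemmaFloquetTauProduct}: $c_{[0,b]}\sim \frac{h_\eta}{\pi}\tau(b)$ as $b\to\infty$. With the normalization $\psi(0)=1$, $\psi'(0)=\eta$, we have $\psi=\theta+\eta\phi$, so
\[
|\psi|^2=\theta^2+|\eta|^2\phi^2+2\Re\eta\,\theta\phi .
\]
The continuum analogue of \cite[Theorem 1.5]{EichLukSimanek} gives the limits of the normalized integrals $\tau(b)^{-1}\int_0^b\theta^2$, $\tau(b)^{-1}\int_0^b\phi^2$ and $\tau(b)^{-1}\int_0^b\theta\phi$, with the entries of $\mathring H_\eta$ possibly permuted to match the ordering $(u',u)$ in $T_{[0,x]}$. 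The same algebra as before then yields $\int_0^b|\psi|^2\sim 2\tau(b)(\Im\eta)^2/(1+|\eta|^2)$. Together with $W=2i\Im\eta$, this gives the claim. Consequently \eqref{eq:supassumption} is equivalent to $\limsup\tau(a_l)/\tau(b_l)<1$, and $\pi c_{[a_l,b_l]}\sim h_\eta(\tau(b_l)-\tau(a_l))$.

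\textbf{Step 2 (matrix scaling limit).} I use the continuum version of Theorem~\ref{thm:ELS1}: $M_b(\xi+z/\tau(b))\to\M_1(z)$, where $M_b(z)=T_{[0,b]}(\xi)^{-1}T_{[0,b]}(z)$. This is the canonical system result of \cite{EichLukSimanek}, which covers Schr\"odinger operators via their canonical system form.

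Next I pass to subsequences along which $\tau(a_l)/\tau(b_l)\to\sigma\in[0,1)$ and $v_{a_l}/\|v_{a_l}\|$ converges. With $\tau_l=\tau(b_l)-\tau(a_l)$, the semigroup property gives
\[
M_{[a_l,b_l]}(\xi+z/\tau_l)\to \M_{1/(1-\sigma)}(z)\,\M_{\sigma/(1-\sigma)}(z)^{-1}=\M_1(z).
\]

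\textbf{Step 3 (scalar kernel).} The Dirichlet eigenvalues of $H_{[a,b]}$ are the zeros of the entry of $T_{[a,b]}(z)\binom01$ that gives $u(b)$. The relevant reproducing kernel is therefore
\[
K_{[a,b]}(z,w)=\int_a^b \overline{u_w}u_z\,dx=e^*\cK_{[a,b]}(z,w)e
\]
for the appropriate unit vector $e$. Possibly a sign or a conjugation by $j$ is needed, since the Dirichlet solution has $u(a)=0$, $u'(a)=1$. I rewrite it as $\tilde v_{a}^*\frac{M_{[a,b]}(w)^*jM_{[a,b]}(z)-j}{\ol w-z}\tilde v_a$ with $\tilde v_a=T_{[0,a]}(\xi)^{-1}e$.

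Applying formula \eqref{eq:KernelFormula} for $\K_1$, the quadratic form in any real unit vector gives
\[
\frac{\sin(h_\eta(\ol w-z))}{\ol w-z}\,\tilde v^*\frac{\mathring H_\eta}{h_\eta}\tilde v ,
\]
because the $j$ term vanishes for real vectors. The prefactor is positive, so after normalizing by $K_{[a_l,b_l]}(\xi,\xi)$ and using Step 1, I obtain the sine kernel limit \eqref{eqnSineKernelScalingLimit} with $c_{[a_l,b_l]}$. Since this limit does not depend on $\sigma$ or the direction, compactness of $[0,\limsup]\times S^1$ removes the subsequence assumption.

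\textbf{Step 4 (conclusion).} The Freud--Levin theorem, in the form valid for de Branges or canonical-system kernels (\cite[Theorem 10.1]{EichLukWor}), then gives \eqref{eq:continuumclock}. I expect this step to be the main obstacle, together with the continuum input in Step 2.

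The Freud--Levin argument requires the truncation kernel to be a reproducing kernel for a space of entire functions whose zeros are exactly the $\xi_j^{[a,b]}$. For Schr\"odinger operators this holds: $H_{[a,b]}$ has compact resolvent, and the kernel is the de Branges kernel of the canonical system on $[a,b]$. Still, checking the hypotheses, including uniform convergence on compacts of $\bbC^2$, is where I would spend the most care.
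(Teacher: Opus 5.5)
Your proposal is correct and follows the paper's route. The paper also says the proof is the discrete one transcribed, with the continuum analogue of Lemma~\ref{lemmaFloquetTauProduct} as the only non-cosmetic input, proved exactly as in your Step~1; the subsequence/compactness sine-kernel argument and the Freud--Levin theorem then finish it, as in your Steps~2--4. One small slip: with the ordering $(u',u)$, the Dirichlet condition at $a$ corresponds to $e=\binom10$ (matching the paper's $v_b=T_{[0,b]}^{-1}(\xi)\binom10$), not $\binom01$, but this is harmless because your quadratic-form computation works for any real unit vector.
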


The proof of this theorem proceeds exactly as in the discrete setting. The only input that differs more than cosmetically is the continuum version of Lemma~\ref{lemmaFloquetTauProduct}, which is readily proven using \cite[Theorem 1.5]{EichLukSimanek}, applicable since this result is proven in the generality of canonical systems. Although it follows in much the same way, we add a proof for the sake of completeness. 
\begin{lemma}\label{lemmaFloquetTauProductSchrod}
Assume that \eqref{eqnBulkUniv5} holds. Then
\[
c_{[0,L]}
\sim
\frac{\Im\eta}{\pi(1+|\eta|^2)}\,\tau(L),
\qquad L\to\infty.
\]
\end{lemma}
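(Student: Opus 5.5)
The approach is to mirror the proof of Lemma~\ref{lemmaFloquetTauProduct}: express $\psi$ in the fundamental basis $\theta,\phi$ and reduce everything to the asymptotic proportions supplied by \cite[Theorem 1.5]{EichLukSimanek} for canonical systems. (The hypothesis reference to \eqref{eqnBulkUniv5} is evidently meant to be the normal limit \eqref{eqnmnormallimitSchr}; that is what we use.) Both sides of the claim are invariant under rescaling $\psi$, since $c_{[0,L]}$ is a ratio of $\int|\psi|^2$ to $W(\ol\psi,\psi)$. We therefore normalize $\psi(0)=1$, $\psi'(0)=\eta$, so that
\[
\psi(x)=\theta(x,\xi)+\eta\,\phi(x,\xi),\qquad W(\ol\psi,\psi)=2i\Im\eta,
\]
consistent with the formula stated before the lemma. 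Since $\theta,\phi$ are real for real $\xi$, we get pointwise
\[
|\psi(x)|^2=\theta(x,\xi)^2+|\eta|^2\phi(x,\xi)^2+2\Re\eta\,\theta(x,\xi)\phi(x,\xi).
\]

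Next we invoke the continuum (canonical system) version of \cite[Theorem 1.5]{EichLukSimanek}. It states that under \eqref{eqnmnormallimitSchr}, $M_{[0,L]}(\xi+z/\tau(L))\to \mathring M^{\eta'}_1(z)$, where $\eta'$ is the boundary value of the Weyl function of the associated canonical system. Taking the $z$-derivative at $z=0$, this is equivalent to
\[
\frac1{\tau(L)}\int_0^L T_{[0,x]}(\xi)^{*}T_{[0,x]}(\xi)\,dx\cdot(\text{appropriate }j\text{-conjugation})\to \mathring H_{\eta'} .
\]
Concretely, the averaged Gram matrix of $(\theta,\phi)$ converges: $\frac1{\tau(L)}\int_0^L\theta^2$, $\frac1{\tau(L)}\int_0^L\phi^2$ and $\frac1{\tau(L)}\int_0^L\theta\phi$ tend to the entries of a normalized rank-two matrix of the form $\mathring H$.

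The main obstacle is bookkeeping. Our transfer matrix has $(u',u)$ ordering with $\theta'$ in the top-left, so the correspondence between $m_+$, the canonical system's Weyl function and $\mathring H_\eta$ must be checked; that is, we must determine which of $\theta,\phi$ plays the role of $p$ versus $q$, and with what sign conventions. I would fix this by requiring consistency with the discrete case. The Weyl solution $\theta+m_+\phi$ in $L^2$ must match $q+\eta p$. Hence $\theta\leftrightarrow q$ and $\phi\leftrightarrow p$, and the expected limits are
\[
\frac1{\tau(L)}\int_0^L\phi^2\to\frac1{1+|\eta|^2},\quad \frac1{\tau(L)}\int_0^L\theta^2\to\frac{|\eta|^2}{1+|\eta|^2},\quad \frac1{\tau(L)}\int_0^L\theta\phi\to-\frac{\Re\eta}{1+|\eta|^2}.
\]
Should the ordering of the entries of $T_{[0,x]}$ produce $-\ol\eta$ or $1/\ol\eta$ instead, the final combination is still forced to be $\Im\eta$-dependent in the same way, so this can be verified directly.

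Plugging these limits in gives
\[
\int_0^L|\psi|^2\sim \tau(L)\,\frac{2|\eta|^2-2(\Re\eta)^2}{1+|\eta|^2}=2\tau(L)\frac{(\Im\eta)^2}{1+|\eta|^2}.
\]
Dividing by $\pi W(\ol\psi,\psi)/i=2\pi\Im\eta$ yields
\[
c_{[0,L]}\sim\frac{\Im\eta}{\pi(1+|\eta|^2)}\,\tau(L),
\]
which is the claim.
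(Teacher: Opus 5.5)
Your proposal is correct and is essentially the paper's proof. You normalize $\psi=\theta+\eta\phi$ with $W(\ol\psi,\psi)=2i\Im\eta$, expand $|\psi|^2$, and feed in the limiting proportions of $\int_0^L\phi^2$, $\int_0^L\theta^2$, $\int_0^L\theta\phi$ relative to $\tau(L)$ from \cite[Theorem 1.5]{EichLukSimanek} (the paper also cites \cite[Eq.~1.30]{EichLukSimanek}); you are also right that the hypothesis should be the normal limit \eqref{eqnmnormallimitSchr}. One small correction: drop your hedge about $-\ol\eta$ or $1/\ol\eta$. A Gram limit $\mathring H_{\eta'}$ with $\eta'\neq\eta$ would give $\frac{|\eta|^2-2\Re\eta\Re\eta'+|\eta'|^2}{1+|\eta'|^2}$ rather than $\frac{2(\Im\eta)^2}{1+|\eta|^2}$, but this is moot because your identification $\phi\leftrightarrow p$, $\theta\leftrightarrow q$ yields exactly the limits the paper uses.
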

\begin{proof}
    The statement is again independent of normalization, and we may freely normalize $\psi$ so that $\psi(0)=1$, $\psi'(0)=\eta $, we have 
    \[
    \psi(x)=\theta(x,\xi)+\eta\phi(x,\xi)
    \]
    so that 
    \[
    |\psi(x)|^2=\theta(x,\xi)^2+|\eta|^2\phi(x,\xi)^2+2\Re(\eta) \theta(x,\xi)\phi(x,\xi).
    \]
    Then, again by \cite[Theorem 1.5]{EichLukSimanek}, along with \cite[Eq. 1.30]{EichLukSimanek}, we have the convergences
    \begin{align*}
        \frac{1}{\tau(L)}\int_0^{L}\phi(x,\xi)^2\,dx & \to \frac{1}{1+|\eta|^2}, \\
        \frac{1}{\tau(L)}\int_0^{L}\theta(x,\xi)^2\,dx & \to \frac{|\eta|^2}{1+|\eta|^2}, \\
        \frac{1}{\tau(L)}\int_0^{L}\theta(x,\xi)\phi(x,\xi)\,dx & \to -\frac{\Re(\eta)}{1+|\eta|^2}.
    \end{align*}
    Thus, we have again 
    \begin{align*}
        \int_{0}^L |\psi|^2\,dx\sim 2\tau(L)\frac{(\Im(\eta))^2}{1+|\eta|^2}
    \end{align*}
    and the result follows after computing $W(\ol\psi,\psi)=2i\Im(\eta)$.
\end{proof}
We now prove our result for two-sided Schr\"odinger operators, in terms of Weyl functions $m_\pm$ associated with the half-line Dirichlet truncations of the whole line Schr\"odinger operator. We prove:
\begin{theorem}\label{thmTwoSidedClockschrod}
Assume that at the point $\xi \in \bbR$,
\begin{equation}\label{eqnReflectionlessSchrod}
 \lim_{y\downarrow 0} m_+(\xi + iy)  =  -\overline{ \lim\limits_{y\downarrow 0} m_-(\xi + iy) } \in \bbC_+.
\end{equation}
Then for any sequences $a_l < b_l$ with $\limsup_{l\to\infty} a_l < \infty$, $\liminf_{l\to\infty} b_l > -\infty$, and $\lim_{l\to\infty} (b_l - a_l) = \infty$,
 we have
\[
\lim_{l \to \infty} c_{[a_l,b_l]}  ( \xi_{j+1}^{[a_l,b_l]} - \xi_{j}^{[a_l,b_l]} ) = 1, \qquad \forall j\in \bbZ.
\]
\end{theorem}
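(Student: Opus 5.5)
We will follow the Jacobi proof of Theorem~\ref{thmTwoSidedClock1}, translated to the continuum transfer matrices $T_{[a,b]}$ acting on $(u',u)^T$. Since $T_{[a,b]}(z)=T_{[0,b]}(z)T_{[0,a]}(z)^{-1}$, the origin is not special. If both $a_l$ and $b_l$ stay bounded below (or both bounded above), we are in the one-sided regime. We pass to subsequences and reduce to three cases.

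\emph{Case 1:} $a_l$ bounded below. Shift the origin so that $a_l\ge 0$. If $c_{[0,a_l]}/c_{[0,b_l]}$ has limit $<1$, Theorem~\ref{thmOneSidedClockschrod} applies directly. If the ratio tends to $1$, this is exactly the situation for which \eqref{eq:supassumption} was imposed. The reflectionless hypothesis is presumably what lets us handle it: we treat the pair as two-sided relative to a new reference point placed inside $[a_l,b_l]$. This is the step I expect to need the most care. The plan is to use that \eqref{eqnReflectionlessSchrod} is invariant under moving the reference point, and to run Case 3 with a moving origin $x_l\in(a_l,b_l)$. Doing so requires the scaling limits of Theorem~\ref{thm:ELS1} uniformly in the base point. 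The conjugation by $T_{[0,x_l]}(\xi)$ is handled as before, via vectors $v$ normalized on the circle.

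\emph{Case 2:} $b_l$ bounded above. This is the mirror image of Case 1, using $m_-$.

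\emph{Case 3:} $a_l\to-\infty$ and $b_l\to+\infty$. Here we prove the continuum analog of Lemma~\ref{lem:scaling}. Set $M_b(z)=T_{[0,b]}(\xi)^{-1}T_{[0,b]}(z)$ and define the reversed matrices $\vl M_a$ through the reflected operator on $(-\infty,0]$, whose Weyl function is $m_-$. Theorem~\ref{thm:ELS1} holds for canonical systems, so $M_b(\xi+z/\tau_+(b))\to \mathring M^{\eta_+}_1(z)$, and similarly for $\vl M_a$ with $\eta_-$. Lemma~\ref{lemmaFloquetTauProductSchrod} converts $\tau_\pm$ into $c$. Passing to a subsequence with $c_{[0,b_l]}/c_{[a_l,b_l]}\to\sigma$, we obtain the matrix kernel limit
\[
\mathring L(z)=\mathring M^{\eta_+}_{\sigma\pi/h_{\eta_+}}(z)\, R\, \mathring M^{\eta_-}_{(1-\sigma)\pi/h_{\eta_-}}(z)^{-1} R ,
\]
where $R$ is the reflection matrix. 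In the continuum, reflecting $x\mapsto -x$ flips the sign of $u'$, so $R=\operatorname{diag}(-1,1)$ in place of the antidiagonal matrix of the Jacobi case.

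The continuum analog of Lemma~\ref{lem:comp} then reads
\[
R\, j\mathring H_\eta R=-j\mathring H_{-\ol\eta},\qquad h_{-\ol\eta}=h_\eta .
\]
We check this by a $2\times2$ computation: conjugating by $R$ flips the off-diagonal sign, which replaces $\Re\eta$ by $-\Re\eta$, and $j$ anticommutes with $R$. Consequently $R\,\mathring M^{\eta_-}_s(z)^{-1}R=\mathring M^{-\ol{\eta_-}}_s(z)$. By \eqref{eqnReflectionlessSchrod}, $-\ol{\eta_-}=\eta_+$, so $\mathring L=\mathring M^{\eta_+}_{\pi/h_{\eta_+}}$, which is a constant Hamiltonian.

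Projecting the matrix kernel onto normalized vectors $r^{-1}U_{a_l}^{-1}\binom{1}{0}\to(\cos\phi,\sin\phi)^T$ gives the sine kernel $\sin(\pi(\ol w-z))/(\pi(\ol w-z))$ after normalizing by $K(\xi,\xi)$. The kernel here is the one whose zeros are the Dirichlet eigenvalues, i.e.\ the entry paired with $u(b)$. The normalization of $\psi$ matches $c$, since reflectionlessness gives $\ol{\psi^-}\propto\psi^+$. The limit is independent of $(\sigma,\phi)$, so compactness gives convergence along the full sequence. The continuum Freud--Levin theorem (\cite[Theorem 10.1]{EichLukWor}, valid for canonical systems) then yields the spacing conclusion.

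The main obstacle is the degenerate endpoint regime $\sigma\in\{0,1\}$, together with the one-sided ratio tending to $1$. Only there is some uniformity of the transfer-matrix limits needed; everything else is routine sign bookkeeping.
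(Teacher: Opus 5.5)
Your argument is correct. In the genuinely two-sided regime (your Case~3) it is the paper's own proof: the continuum matrix scaling limit of Lemma~\ref{lem:scalingschrod} with $S=\operatorname{diag}(-1,1)$, then the identity $S\mathring{M}^{\eta}_t(z)^{-1}S=\mathring{M}^{-\ol\eta}_t(z)$ together with $h_{-\ol\eta}=h_\eta$. You track the time parameters more carefully than the paper's statement of \eqref{eq:reflectioncollapse} does. From there you get the collapse $\mathring{L}=\mathring{M}^{\eta_+}_{\pi/h_{\eta_+}}$, project onto $(\cos\phi,\sin\phi)^T$, use compactness in $(\sigma,\phi)$, and finish with Freud--Levin. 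For a bounded endpoint you cite Theorem~\ref{thmOneSidedClockschrod} and its reflection, where $\ol{\psi^-}\propto\psi^+$ makes the two scales agree. The paper's scheme instead lets $\sigma\in\{0,1\}$ in the two-sided lemma, with a bounded $a_l$ simply giving $\vl{M}_{a_l}(\xi+z/c_{[a_l,b_l]})\to I$. Both routes work.

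The obstacle you single out, however, does not exist. The hypothesis $\limsup_{l\to\infty} a_l<\infty$ rules out both endpoints escaping to $+\infty$, and $\liminf_{l\to\infty} b_l>-\infty$ rules out both escaping to $-\infty$. So in your Case~1, $a_l$ is bounded. After shifting the origin, $c_{[0,a_l]}$ stays bounded while $c_{[0,b_l]}\to\infty$, and the ratio tends to $0$, never to $1$.

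You should drop that branch rather than leave it as a plan. The moving-origin argument would need a version of Theorem~\ref{thm:ELS1} that is uniform in the base point. Neither the pointwise hypothesis \eqref{eqnReflectionlessSchrod} nor anything in the paper supplies this; it is essentially the open problem stated after Theorem~\ref{thmOneSidedClock}.

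Likewise, $\sigma\in\{0,1\}$ needs nothing beyond the local uniformity in $z$ already contained in Theorem~\ref{thm:ELS1}. For $\sigma=0$, write $z/c_{[a_l,b_l]}=z_l/\tau^+(b_l)$ with $z_l=z\,\tau^+(b_l)/c_{[a_l,b_l]}$. By Lemma~\ref{lemmaFloquetTauProductSchrod}, $z_l\to0$ uniformly on compacts, so $M_{b_l}(\xi+z/c_{[a_l,b_l]})\to\mathring{M}^{\eta_+}_1(0)=I$. The case $\sigma=1$ is symmetric.
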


We begin by relating forward and backward transfer matrices. We consider the reversed transfer matrix
$
    \overleftarrow{T}_{[b,a]}(z)
$
acting backwards from solutions at $b$ to solutions at $a$. We use the convention
\[
\overleftarrow{T}_{[b,a]}(z)\begin{pmatrix}
    -u'(b)\\u(b)
\end{pmatrix}=\begin{pmatrix}
    -u'(a)\\u(a)
\end{pmatrix}.
\]
The minus signs can be motivated by how a reflection in the spatial variable affects the derivative; this is consistent with turning $H_-$ into an operator on $L^2((0,\infty))$ with the potential $V(-x)$.
We see that 
\begin{align}\label{eq:051326}
    T_{[a,b]}(z)=\begin{pmatrix}
        -1&0\\0&1
    \end{pmatrix}
    \vl{T}_{[b,a]}(z)^{-1}\begin{pmatrix}
        -1& 0\\0& 1
    \end{pmatrix}.
\end{align}
We adopt the shorthand \[
S=\begin{pmatrix}
    -1&0\\0&1
\end{pmatrix}.
\]
Taking $a< 0\leq b$, we study as before the rescaling limits of the kernel $\cK_{[a,b]}(z,w)$, and we have $T_{[a,b]}(z)=T_{[0,b]}(z)T_{[a,0]}(z)$ and taking $b=0$ in the above formula  
\begin{align*}
   T_{[a,0]}(z)= S
    \vl{T}_{[0,a]}(z)^{-1}S.
\end{align*}
We introduce
\[
M_b(z)=T_{[0,b]}(\xi)^{-1}T_{[0,b]}(z)
\]
and 
\[
\vl{M}_{a}(z)=\vl{T}_{[0,a]}(\xi)^{-1}\vl{T}_{[0,a]}(z)
\]
and write as before 
\[
\eta_\pm =\lim_{y\downarrow0}m_\pm(\xi + iy).
\]
Analogously, we define for $a<0\leq b$,
\begin{align*}
    \tau^+(b)=\int_0^b(\theta(x,\xi)^2+\phi(x,\xi)^2)dx,\; \tau^-(a)=\int_a^0(\theta(x,\xi)^2+\phi(x,\xi)^2)dx
\end{align*}
so that as before we get from Theorem~\ref{thm:ELS1}
\begin{align*}
    \lim_{b\to\infty}M_b(\xi + z/\tau^+(b)) & =\mathring{M}_{[0,1]}^{\eta_+}(z), \\
    \lim_{a\to -\infty}\vl{M}_{a}(\xi + z/\tau^-(a)) & =\mathring{M}_{[0,1]}^{\eta_-}(z).
\end{align*}

With these definitions in hand, and by an entirely analogous proof, we have the following continuum analog of Lemma~\ref{lem:scaling}:
\begin{lemma}\label{lem:scalingschrod}
Assume that $m_\pm$ have normal limits at $\xi$,
\begin{align}
\lim_{y\to 0}m_{\pm}(\xi + iy)=\eta_{\pm} \in \bbC_+.
\end{align}
Assume also that $a_l \to -\infty$, $b_l \to +\infty$ are such that
\begin{align}
\lim_{l\to\infty} \frac{ c_{[0,b_l]} }{ c_{[a_l, b_l]}} = \sigma \in [0,1]    
\end{align}
exists. Then for
\[
U_a = S \vl T_{[0,a]}(\xi)
 S
\]
and
\[
\mathring{L}(z) = \mathring{M}_{\sigma\pi/h_{\eta_+}}^{\eta_+}(z)S
 \mathring{M}_{(1-\sigma)\pi/h_{\eta_-}}^{\eta_-}\left(z\right)^{-1}
 S
\]
we have
\begin{align}\label{eqngosnewSchr}
\lim_{ l \to\infty} U_{a_l}^* \frac {\cK_{[a_l,b_l]}\left(\xi + \frac{z}{c_{[a_l,b_l]}}, \xi + \frac{w}{c_{[a_l,b_l]}} \right)}{c_{[a_l,b_l]}}
U_{a_l} = \frac{\mathring{L}(w)^*j \mathring{L}(z) - j}{\ol w-z}.  
\end{align}
\end{lemma}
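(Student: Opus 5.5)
We follow the proof of Lemma~\ref{lem:scaling} step by step, with the off-diagonal matrices of the discrete setting replaced by $S$. The plan has three steps: a factorization of $T_{[a,b]}$, a conjugation identity for the kernel, and then the scaling limits.

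\textbf{Factorization.} For $a<0\le b$, combine $T_{[a,b]}(z)=T_{[0,b]}(z)T_{[a,0]}(z)$ with $T_{[a,0]}(z)=S\vl T_{[0,a]}(z)^{-1}S$, using $S^2=I$. This gives
\[
T_{[a,b]}(z)U_a = T_{[0,b]}(z)\,S\,\vl T_{[0,a]}(z)^{-1}\vl T_{[0,a]}(\xi)\,S = T_{[0,b]}(\xi)\,L_{a,b}(z),
\]
where
\[
L_{a,b}(z)=M_b(z)\,S\,\vl M_a(z)^{-1}S .
\]

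\textbf{Conjugation identity.} The transfer matrices of the Schr\"odinger equation have determinant $1$, so $T_{[0,b]}(\xi)$ and $\vl T_{[0,a]}(\xi)$ lie in $\SL(2,\bbR)$. Since $\det S=-1$, $U_a$ also has determinant $1$ and is real. Real $2\times 2$ matrices $A$ with $\det A=1$ satisfy $A^*jA=j$, so for such $U_a$ and $T_{[0,b]}(\xi)$,
\[
U_a^*\cK_{[a,b]}(z,w)U_a=\frac{L_{a,b}(w)^*jL_{a,b}(z)-j}{\ol w-z}.
\]
The factor $S$ appears inside $L_{a,b}$ but is never commuted past $j$, so its negative determinant causes no trouble.

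\textbf{Scaling limits.} Put $z\mapsto \xi+z/c_{[a_l,b_l]}$ and use the limits $M_b(\xi+z/\tau^+(b))\to\mathring M^{\eta_+}_1(z)$ and $\vl M_a(\xi+z/\tau^-(a))\to \mathring M^{\eta_-}_1(z)$ stated just above. These hold locally uniformly, and $\mathring M^\eta_t(z)=e^{ztj\mathring H_\eta}$ is jointly continuous in $(t,z)$. It therefore suffices to show
\[
\frac{\tau^+(b_l)}{c_{[a_l,b_l]}}\to\frac{\sigma\pi}{h_{\eta_+}},\qquad \frac{\tau^-(a_l)}{c_{[a_l,b_l]}}\to \frac{(1-\sigma)\pi}{h_{\eta_-}} .
\]
Then $\tau^+(b_l)/c_{[a_l,b_l]}=t_l\to t$ gives $M_{b_l}(\xi+z/c)=M_{b_l}(\xi+t_lz/\tau^+(b_l))\to \mathring M^{\eta_+}_1(tz)=\mathring M^{\eta_+}_t(z)$, and similarly for $\vl M$. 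Inversion is continuous on $\SL(2,\bbC)$, so $L_{a_l,b_l}(\xi+z/c_{[a_l,b_l]})\to \mathring L(z)$ locally uniformly. The conjugation identity then yields \eqref{eqngosnewSchr}. Uniformity is in $z$ and $w$ jointly, and the difference quotient on the diagonal $\ol w=z$ is handled by the maximum principle/Cauchy estimates, exactly as in Theorem~\ref{thm:ELS1}.

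The ratio limits follow from the additivity $c_{[a,b]}=c_{[a,0]}+c_{[0,b]}$ of the two-sided scale, built from $\psi^\pm$ as in Section~\ref{sectionFloquet}. By Lemma~\ref{lemmaFloquetTauProductSchrod}, $c_{[0,b]}\sim h_{\eta_+}\tau^+(b)/\pi$. Applying the same lemma to the reflected half-line operator with potential $V(-x)$ and Weyl function $m_-$ gives $c_{[a,0]}\sim h_{\eta_-}\tau^-(a)/\pi$. Dividing by $c_{[a_l,b_l]}$ and using $c_{[0,b_l]}/c_{[a_l,b_l]}\to\sigma$ gives both limits. In the degenerate cases $\sigma\in\{0,1\}$ one ratio tends to $0$, and the corresponding factor tends to $I=\mathring M_0$, which is consistent.

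\textbf{Main obstacle.} The nontrivial point is the reflection bookkeeping. We must check three things:
\begin{itemize}
\item the convention $\vl T_{[b,a]}$ acting on $\binom{-u'}{u}$ really corresponds to the half-line problem for $V(-x)$, with Weyl function $m_-$ in the normalization required by Theorem~\ref{thm:ELS1};
\item the backward eigensolution $\psi^-$ defining $c_{[a,0]}$ is the one produced by that reflected problem;
\item the fundamental solutions at $0$ used in $\tau^-$ match the reflected $\theta,\phi$ up to the sign changes introduced by $S$.
\end{itemize}
These sign changes do not affect $\theta^2+\phi^2$, so $\tau^-$ is unaffected. We would first verify relation \eqref{eq:051326} directly from the definitions and then apply Lemma~\ref{lemmaFloquetTauProductSchrod} to the reflected operator.
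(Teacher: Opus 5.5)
Your proposal is correct and is exactly the ``entirely analogous'' argument the paper intends, transplanted from Lemma~\ref{lem:scaling}. It uses the same three steps: the factorization $T_{[a,b]}(z)U_a=T_{[0,b]}(\xi)L_{a,b}(z)$ with $L_{a,b}=M_b\,S\,\vl M_a^{-1}S$, the $\SL(2,\bbR)$ conjugation of the matrix kernel, and the conversion of $\tau^\pm$ to the scale $c_{[a_l,b_l]}$ via Lemma~\ref{lemmaFloquetTauProductSchrod} applied on each half-line. You are also right to use the two-sided scale built from $\psi^\pm$, with $c_{[a,0]}$ computed from the reflected problem for $V(-x)$: the single-$\psi$ scale written in the Schr\"odinger section gives the factor $h_{\eta_-}/\pi$ on the negative half-line only when $\eta_+=-\ol{\eta_-}$.
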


As before, we may denote $r_a = \lVert U_{a_l}^{-1} \binom 10\rVert$, and along a subsequence such that we have convergence of the form
\[
\frac{ U_{a_l}^{-1} \binom 10 }{r_{a_l}} \to \binom{\cos\phi}{\sin\phi}
\]
for some $\phi$, Lemma~\ref{lem:scalingschrod} implies a scaling limit of scalar kernels
\begin{align}\label{eqngosSchr}
\lim_{ l \to\infty}&\frac {K_{[a_l,b_l]}\left(\xi + \frac{z}{c_{[a_l,b_l]}}, \xi + \frac{w}{c_{[a_l,b_l]}} \right)}{r_{a_l}^2 c_{[a_l,b_l]}}
 = \binom{\cos\phi}{\sin\phi}^* \frac{\mathring{L}(w)^*j\mathring{L}(z)}{\ol w-z}\binom{\cos\phi}{\sin\phi}.
\end{align}

We now record the continuum analog of Lemma~\ref{lem:comp}, which is significantly simpler in the continuum. In this context, this may be viewed as a manifestation of the reflectionlessness condition being simpler in this setting.

\begin{lemma}
    We have for $\eta\in \C_+$, $t\in \R$,
    \begin{align}\label{eq:conjugation}
        S\mathring{M}_{t}^{\eta}(z)^{-1}S=\mathring{M}^{-\ol \eta}_{t}.
    \end{align}
    and if $\eta_+=-\ol \eta_-$
    \begin{align}\label{eq:reflectioncollapse}
\mathring{M}_{\sigma}^{\eta_+}\left(z\right)
S
 \mathring{M}_{1-\sigma}^{\eta_-}\left(z\right)^{-1} S =\mathring{M}^{\eta_+}_{1}(z) 
    \end{align}
\end{lemma}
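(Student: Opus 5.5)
Both identities follow from the explicit form $\mathring{M}^\eta_t(z)=e^{ztj\mathring{H}_\eta}$, by conjugating the generator $j\mathring{H}_\eta$ with $S$ and then exponentiating.

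\textbf{Step 1: conjugating the generator.} We have $S^{-1}=S$ and $\mathring{M}^{\eta}_t(z)^{-1}=e^{-ztj\mathring{H}_\eta}$. Hence
\[
S\mathring{M}^{\eta}_t(z)^{-1}S=\exp\bigl(-zt\,Sj\mathring{H}_\eta S\bigr).
\]
A direct computation gives $Sj=-jS$: indeed $Sj=\begin{pmatrix}0&1\\1&0\end{pmatrix}$ and $jS=\begin{pmatrix}0&-1\\-1&0\end{pmatrix}$. Therefore
\[
-Sj\mathring{H}_\eta S=jS\mathring{H}_\eta S.
\]
Conjugation by $S$ flips the sign of the off-diagonal entries of the symmetric matrix $\mathring{H}_\eta$. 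So $S\mathring{H}_\eta S$ has diagonal entries $1/(1+|\eta|^2)$ and $|\eta|^2/(1+|\eta|^2)$, and off-diagonal entry $+\Re\eta/(1+|\eta|^2)$.

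\textbf{Step 2: identifying the conjugate.} Take $\zeta=-\ol\eta$. Then $\zeta\in\bbC_+$, since $\Im\zeta=\Im\eta>0$. Also $|\zeta|=|\eta|$ and $\Re\zeta=-\Re\eta$. It follows that $S\mathring{H}_\eta S=\mathring{H}_{-\ol\eta}$, and exponentiating gives
\[
S\mathring{M}^\eta_t(z)^{-1}S=e^{ztj\mathring{H}_{-\ol\eta}}=\mathring{M}^{-\ol\eta}_t(z),
\]
which is \eqref{eq:conjugation}. This is the continuum counterpart of Lemma~\ref{lem:comp}, and it is simpler because the intertwiner $S$ is diagonal and involves no $a_0$, so no rescaling of $t$ by $h_\zeta/h_\eta$ is needed. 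In fact $h_{-\ol\eta}=h_\eta$ in any case.

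\textbf{Step 3: the collapse \eqref{eq:reflectioncollapse}.} Apply \eqref{eq:conjugation} with $\eta=\eta_-$ and $t=1-\sigma$. Since $\eta_+=-\ol{\eta_-}$, the middle factor becomes $\mathring{M}^{\eta_+}_{1-\sigma}(z)$. The left-hand side is then $e^{z\sigma j\mathring{H}_{\eta_+}}e^{z(1-\sigma)j\mathring{H}_{\eta_+}}$. These exponentials share the same generator, so they commute and combine into $e^{zj\mathring{H}_{\eta_+}}=\mathring{M}^{\eta_+}_1(z)$.

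The same argument works with the time parameters $\sigma\pi/h_{\eta_+}$ and $(1-\sigma)\pi/h_{\eta_-}$ appearing in $\mathring{L}$, because $h_{\eta_-}=h_{\eta_+}$ under the reflectionless condition. There is no real obstacle here; the only point requiring care is tracking the sign convention in $Sj=-jS$ so that one lands on $-\ol\eta$ and not on $\ol\eta$.
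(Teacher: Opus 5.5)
Your proof is correct and follows the same route as the paper: conjugation by $S$ flips the off-diagonal signs, so $S\mathring{H}_\eta S=\mathring{H}_{-\ol\eta}$; combining this with $SjS=-j$ and exponentiating gives the first identity. The collapse identity then follows from the first by the semigroup property of $t\mapsto\mathring{M}^{\eta_+}_t(z)$. Your added remark that $h_{-\ol\eta}=h_\eta$ is also correct and useful: it shows the collapse works with the actual time parameters $\sigma\pi/h_{\eta_+}$ and $(1-\sigma)\pi/h_{\eta_-}$ that appear in $\mathring{L}$.
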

\begin{proof}
    For the identity \eqref{eq:conjugation}, it suffices to note that conjugation by $S$ swaps the signs of the off diagonals:
    \begin{align*}
    S\mathring{H}_\eta S=\frac{1}{1+|\eta|^2}\begin{pmatrix}
        1&\Re(\eta)\\
        \Re(\eta)&|\eta|^2
    \end{pmatrix} =\mathring{H}_{-\overline{\eta}}
    \end{align*}
    so that since $SjS=-j$, by taking matrix exponentials we have \eqref{eq:conjugation}.

    The second identity \eqref{eq:reflectioncollapse} follows immediately from the first, since by \eqref{eq:conjugation} we have
    \begin{align*}
\mathring{M}_{\sigma}^{\eta_+}\left(z\right)
S
 \mathring{M}_{1-\sigma}^{\eta_-}\left(z\right)^{-1} S =
 \mathring{M}_{\sigma}^{\eta_+}\left(z\right)
 \mathring{M}_{1-\sigma}^{-\ol \eta_-}\left(z\right) = \mathring{M}^{\eta_+}_{1}(z)
    \end{align*}
    since $\eta_+=-\ol{\eta}_-$.
\end{proof}

With this in hand, as in the previous section, we see that the reflectionless condition leads to a simplification of $\mathring{L}(z)$, with the condition $\eta_+=-\ol \eta_-$ implying that the limiting matrix kernel corresponds to a canonical system with constant (rather than piecewise constant) Hamiltonian. This leads to a $\sigma$-independent simplification of the limit \eqref{eqngosSchr} in the reflectionless case.

\bibliographystyle{plainnat}
\bibliography{lit.bib}

\end{document}